\tikzset{snake it/.style={decorate, decoration=snake}}
\theoremstyle{plain}
\newtheorem{thm}{Theorem}[section]
\newtheorem{cor}[thm]{Corollary}
\newtheorem{lem}[thm]{Lemma}
\newtheorem{prop}[thm]{Proposition}
\newtheorem{conj}[thm]{Conjecture}
\newtheorem{question}[thm]{Question}
\theoremstyle{definition}
\theoremstyle{remark}
\newtheorem{rmk}[thm]{Remark}
\newcommand{\BC}{{\mathbb{C}}}
\newcommand{\BF}{{\mathbb{F}}}
\newcommand{\BP}{{\mathbb{P}}}
\newcommand{\BQ}{{\mathbb{Q}}}
\newcommand{\BZ}{{\mathbb{Z}}}
\newcommand{\CC}{{\mathcal C}}
\newcommand{\CE}{{\mathcal E}}
\newcommand{\CF}{{\mathcal F}}
\newcommand{\CH}{{\mathcal H}}
\newcommand{\CI}{{\mathcal I}}
\newcommand{\CL}{{\mathcal L}}
\newcommand{\CM}{{\mathcal M}}
\newcommand{\CN}{{\mathcal N}}
\newcommand{\CO}{{\mathcal O}}
\newcommand{\CP}{{\mathcal P}}
\DeclareFontFamily{OT1}{rsfs}{}
\DeclareFontShape{OT1}{rsfs}{n}{it}{<-> rsfs10}{}
\DeclareMathAlphabet{\curly}{OT1}{rsfs}{n}{it}
\begin{document}
\title[Cohomology ring of the moduli of one-dimensional sheaves]{Generators for the cohomology ring of the moduli of one-dimensional sheaves on $\BP^2$}
\date{\today}

\newcommand\blfootnote[1]{%
  \begingroup
  \renewcommand\thefootnote{}\footnote{#1}%
  \addtocounter{footnote}{-1}%
  \endgroup
}

\author[W. Pi]{Weite Pi}
\address{Yale University}
\email{weite.pi@yale.edu}

\author[J. Shen]{Junliang Shen}
\address{Yale University}
\email{junliang.shen@yale.edu}

\subjclass[2020]{14C15, 14D22, 14F06}
\keywords{Cohomology and Chow rings, generators, moduli of sheaves.}

\begin{abstract}
We explore the structure of the cohomology ring of the moduli space of stable 1-dimensional sheaves on $\BP^2$ of any degree. We obtain a minimal set of tautological generators, which implies an optimal generation result for both the cohomology and the Chow ring of the moduli space. Our approach is through a geometric study of tautological relations.

\end{abstract}

\maketitle

\setcounter{tocdepth}{1} 

\tableofcontents
\setcounter{section}{-1}

\section{Introduction}

\subsection{Overview and motivation}
\label{intro}
Throughout, we work over the complex numbers $\BC$, and fix two integers $d$ and $\chi$ satisfying $d \geq 1$ and $\mathrm{gcd}(d,\chi)=1$.

The moduli space $M_{d,\chi}$ of stable 1-dimensional sheaves $\CF$ on $\BP^2$ with
\[
[\mathrm{supp}(\CF)] = dH \in  H_2(\BP^2, \BZ), \quad \chi(\CF) = \chi
\]
is a nonsingular irreducible projective variety of dimension $d^2+1$ \cite{LeP}. Here $H$ is the class of a line, $\mathrm{supp}(-)$ denotes the Fitting support, and the stability is with respect to the slope
\[
\mu( \CE) = \frac{\chi(\CE)}{c_1(\CE)\cdot H} \in \BQ.
\]

Geometry and topology of the moduli spaces $M_{d,\chi}$ have been studied intensively for decades from the perspectives of strange duality \cite{YY1,YY2,YY3}, birational geometry \cite{Woolf},  enumerative geometry of local $\BP^2$ \cite{CC15, PB, PB2, BFGW, MS_GT, YY4}, and so on. Much effort has been made to explicitly describe $M_{d,\chi}$ for low values of $d$; see for example \cite{DM11, Mai11, Mai13, CM14, ChM, BMW, YY0}. 

The moduli spaces $M_{d,\chi}$ share similar features with another type of interesting moduli spaces --- moduli of Higgs bundles. By the BNR correspondence \cite{BNR}, the moduli of stable Higgs bundles on a Riemann surface $C$ can be viewed as the moduli of stable 1-dimensional sheaves on the surface $T^*C$, and the Hitchin system \cite{Hit, Hit1} is exactly the Hilbert--Chow morphism sending a 1-dimensional sheaf to its Fitting support; this is analogous to the map
\begin{equation}\label{Hilb-Chow}
h: M_{d,\chi} \to \BP H^0(\BP^2, \CO_{\BP^2}(d)), \quad \CF \mapsto \mathrm{supp}(\CF)
\end{equation}
associated with the moduli space $M_{d,\chi}$. The recent work \cite{MS_GT} provides further evidence to this analogy where the decomposition theorem \cite{BBD} and the support theorem \cite{Ngo, CL, MS_GT} were applied to study (intersection) cohomology groups for both types of moduli spaces.

On the other hand, the ring structure of the cohomology of the Higgs moduli space has formed a rich subject to study. The cohomology of the Higgs moduli space is known to be generated by \emph{tautological classes} \cite{Markman}. A complete set of relations was found in the case of rank two \cite{HT2}, which relies on earlier work of the tautological relations for the moduli of vector bundles \cite{Kir}. All these results play an important role in some recent progress of the $P=W$ conjecture \cite{dCHM1} which is a deep connection between the cohomology of the Higgs moduli space and the Hodge theory of character variety; see also \cite{dCMS, dCSZ}.

Our main interest here is to explore the ring structure for the cohomology of $M_{d,\chi}$ in terms of the tautological classes. A better understanding for $M_{d,\chi}$ may shed new light on understanding the cohomology ring of the moduli of stable 1-dimensional sheaves on a surface, which includes the moduli space of Higgs bundles.

Before introducing more notation, we first state a brief version of our main result. As explained in Remark \ref{rmk1.6} $H^*(M_{d,\chi})$ has no odd class. Furthermore, by \cite[Theorem 2]{Integral} the cycle class map from Chow to cohomology is an isomorphism. Hence we use $A^*(-)$ to denote the even cohomology $H^{2*}(-, \BQ)$, or equivalently, the Chow ring $\mathrm{CH}^*(-)$ with $\BQ$-coefficients. We denote by $R^k(-)$ the subalgebra of $A^*(-)$ generated by the classes in $A^{\leq k}(-)$.

\begin{thm}\label{thm0}
For $d\geq 3$, we have\footnote{When $d=1,2$, the moduli space $M_{d,\chi}$ is a projective space $\BP H^0(\BP^2,\CO_{\BP^2}(d))$ whose cohomological structure is clear.}
\[
R^{d-3}(M_{d,\chi}) \subsetneqq R^{d-2}(M_{d,\chi}) = A^*(M_{d,\chi}).
\]
\end{thm}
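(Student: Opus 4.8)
The plan is to work with the normalized universal sheaf $\mathbb{F}$ on $M_{d,\chi}\times\mathbb{P}^2$ (which exists since $\gcd(d,\chi)=1$) and the three families of tautological classes obtained by integrating its Chern characters against $1,H,\mathrm{pt}\in H^*(\mathbb{P}^2)$, i.e. classes of the form $\pi_{M*}\!\left(\mathrm{ch}_{k}(\mathbb{F})\cdot\pi^*\gamma\right)$, which land in $A^{k-2}$, $A^{k-1}$, $A^{k}$ respectively. I would first establish that the full collection of these classes generates $A^*(M_{d,\chi})$ in all degrees — the analogue for $M_{d,\chi}$ of Markman's generation theorem for Higgs moduli — either by invoking the general theory of moduli of sheaves or by re-deriving it through the geometry of the support morphism $h$. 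Granting generation, the statement splits into two independent assertions: the equality $R^{d-2}=A^*$, a \emph{redundancy} claim that every tautological generator of degree $>d-2$ lies in the subalgebra generated by lower-degree ones, and the strict inclusion $R^{d-3}\subsetneq R^{d-2}$, a \emph{minimality} claim exhibiting an essential generator in degree exactly $d-2$.

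For the equality I would produce tautological relations from the geometry of the universal curve $\mathcal{C}\subset M_{d,\chi}\times\mathbb{P}^2$, the pullback of the universal degree-$d$ plane curve under $h\times\mathrm{id}$, whose class is $dH+\theta$ with $\theta=h^*\mathcal{O}_{\mathbb{P}^N}(1)$. Since $\mathbb{F}$ is supported (as a sheaf) on $\mathcal{C}$, Grothendieck--Riemann--Roch for this codimension-one support, combined with the relation $H^3=0$ on $\mathbb{P}^2$, forces the high Chern characters of $\mathbb{F}$ to be expressible through lower ones. The crucial numerical input is that the canonical bundle of a smooth degree-$d$ plane curve is $\mathcal{O}_C(d-3)$: the twisted pushforwards $\pi_{M*}\big(\mathbb{F}\otimes\pi^*\mathcal{O}(k)\big)$ have fiberwise cohomology governed by Serre duality on the degree-$d$ curves, and I expect the resulting relation to cap the range of essential twists, hence the degree of essential tautological generators, at $d-2$. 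Making this precise — identifying exactly which relation kills degrees $>d-2$ while leaving degree $d-2$ untouched — is the delicate point.

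For the strict inclusion I would exhibit a specific degree-$(d-2)$ tautological class that cannot lie in $R^{d-3}$. The natural route is to restrict all classes to a well-chosen test subvariety of $M_{d,\chi}$ on which the tautological classes are computable — for instance a locus over which $h$ realizes a projective bundle or a relative compactified Jacobian, or a boundary stratum built from sheaves supported on degenerate curves — and to verify that there the candidate degree-$(d-2)$ class is nonzero while every product of classes of degree $\leq d-3$ restricts to something of strictly smaller degree or vanishes. Alternatively one can argue on the associated graded of the perverse (Hilbert--Chow) filtration attached to $h$, where the tautological generators admit a clean leading-term description and a degree-counting argument detects the independence of the degree-$(d-2)$ class.

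The main obstacle is the sharpness of the bound: one needs \emph{enough} tautological relations to reduce all generators to degree $\leq d-2$, yet must guarantee that these same relations do not also eliminate the degree-$(d-2)$ generator. The two halves of the theorem are thus in tension, and the whole argument hinges on pinning down the precise relation produced by the support geometry — governed by the arithmetic of plane curves of degree $d$, in particular the adjunction identity $K_C=\mathcal{O}_C(d-3)$ — rather than on any soft or purely formal input.
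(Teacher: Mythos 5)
Your overall skeleton (tautological generation via Beauville's theorem, then relations capping the generators at degree $d-2$, then a minimality statement) is the same as the paper's, but both substantive halves of your argument are left as plans, and the concrete mechanisms you propose are not ones that can be made to work.

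For the redundancy half, your proposed source of relations fails at the start: the fact that $\BF$ is a torsion sheaf supported on the universal curve of class $dH+\theta$ determines \emph{only} $\mathrm{ch}_1(\BF)$ (this is exactly Lemma \ref{lem1.1} of the paper); it imposes no constraint on $\mathrm{ch}_k(\BF)$ for $k\geq 2$, which are precisely the tautological generators. Grothendieck--Riemann--Roch for the inclusion of the support merely rewrites these classes in terms of unknown classes on the universal curve, so ``support $+$ $H^3=0$ $+$ adjunction'' produces no tautological relations at all; the adjunction formula $K_C=\CO_C(d-3)$ plays no role in the actual bound. The paper's relations come instead from \emph{stability}: for $n\in\{1,2,3\}$ and any line $H_P\subset\BP^2$, slope stability gives $\Hom(\CF,\CO_{H_P}(-n))=0$ and, via Serre duality on $\BP^2$, $\Ext^2(\CF,\CO_{H_P}(-n))=0$; hence the pushforward to $M_{d,\chi}\times\check{\BP}^2$ of the relative Ext-complex of $\BF$ into $q^*\CO_Z\otimes\pi_P^*\CO_{\BP^2}(-n)$ ($Z$ the incidence correspondence of lines) is a single vector bundle of rank $d$ placed in degree one, so its Chern classes vanish in degrees $\geq d+1$, and integrating over powers of the hyperplane class of $\check{\BP}^2$ yields relations starting exactly in degree $d-1$. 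Your fallback, the pushforwards $\pi_{M*}(\BF\otimes\pi_P^*\CO(k))$, cannot reach the sharp bound: when such a pushforward is a bundle its rank is $|\chi+kd|$, so the Chern-class relations it produces begin in degree $|\chi+kd|+1$, in general far above $d-1$; the point of Hom-ing into twisted lines is that the rank becomes $d$, independent of $\chi$, and the extra $\check{\BP}^2$-direction lowers the starting degree by two. Even granting the vanishing, killing every generator in degrees $d-1$ and $d$ is delicate: it requires combining the three values of $n$ and checking nonvanishing of determinants such as $\chi(d-2)(d-\chi)(d-2\chi)$, which is where $\gcd(d,\chi)=1$ and $0<\chi<d$ enter; none of this leading-term analysis appears in your proposal.

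For the minimality half, no soft restriction or leading-term argument of the kind you sketch is known to work, and the natural test varieties lose information (for instance, on a smooth fiber $h^{-1}([C])\simeq\mathrm{Jac}(C)$ the generator $c_0(2)$ restricts to zero). The paper proves the stronger statement that there is \emph{no} relation among the $3d-7$ generators in degrees $\leq d-2$, and the proof is quantitative: Yuan's recent theorem (resting on the support-theorem dimension estimates of Maulik--Shen) identifies $b_i(M_{d,\chi})$ for $i\leq 2d-4$ with Betti numbers of a Hilbert scheme of points on $\BP^2$, and G\"ottsche's formula shows that $b_{2d-4}$ equals the number of degree-$(d-2)$ monomials in a free graded algebra with two generators in degree $1$ and three in each of degrees $2,\dots,d-2$; since the actual dimension attains this maximum, freeness follows. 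Your proposal supplies no substitute for this external Betti-number input, so the strict inclusion $R^{d-3}\subsetneqq R^{d-2}$ remains unproved in your approach.
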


The theorem asserts that the entire cohomology of the $(d^2+1)$-dimensional variety $M_{d,\chi}$ is generated by classes of (algebraic) degrees $\leq d-2$, and this bound is optimal. A more detailed version of Theorem \ref{thm0} is given by Theorem \ref{thm1} where we describe a minimal set of tautological generators for $A^*(M_{d,\chi})$.

\subsection{Tautological classes}
Let $\BF$ be a universal family over $\BP^2 \times M_{d,\chi}$; it is a torsion sheaf supported on a divisor. For a stable sheaf $[\CF] \in M_{d,\chi}$, the restriction of $\BF$ to the fiber $\BP^2 \times [\CF]$ recovers $\CF$.

A natural way to construct classes in $A^*(M_{d,\chi})$ is to integrate $\mathrm{ch}_{1+k}(\BF)$ over a class $H^j \in A^j(\BP^2)$. While the choice of a universal family is not unique, we may normalize the universal class $\mathrm{ch}_{k+1}(\BF) \in A^k(\BP^2 \times M_{d,\chi})$ and integrate it over $H^j$; this gives rise to the tautological classes 
\[
c_k(j) \in A^{k+j-1}(M_{d,\chi}), \quad j \in \{0,1,2\};
\]
see Section \ref{Sec1.1}. Here the normalization is characterized by $c_1(0) = c_1(1) = 0$. These classes behave nicely under natural symmetries of the moduli spaces $M_{d,\chi}$ (Proposition \ref{prop1.4}).

\medskip

Our main result is the following.

\begin{thm}\label{thm1} Assume $d \geq 3$.We have:
\begin{enumerate}
    \item[(a)] $A^*(M_{d,\chi})$ is generated over $\BQ$ by the $3d-7$ classes of degrees $\leq d-2$:
    \begin{equation}\label{taut}
   c_0(2), c_2(0) \in A^1(M_{d,\chi}), \quad  c_{k}(0), c_{k-1}(1), c_{k-2}(2) \in A^{k-1}(M_{d,\chi}),~~~ k\in \{3,\dots, d-1 \}.
    \end{equation}
    \item[(b)] There is no relation among these $3d-7$ classes of (\ref{taut}) in degrees $\leq d-2$.
\end{enumerate}

\end{thm}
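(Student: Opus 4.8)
The plan is to treat the two assertions by different methods: (a) is a \emph{generation} statement, which I would reduce to two inputs --- first that the full collection $\{c_k(j)\}_{k\ge 0,\, j\in\{0,1,2\}}$ already generates $A^*(M_{d,\chi})$, and second a family of \emph{tautological relations} expressing every $c_k(j)$ of degree $\ge d-1$ as a polynomial in the classes of degree $\le d-2$; while (b) is an \emph{independence} statement, which I would settle by matching $\dim_\BQ A^m(M_{d,\chi})$ in degrees $m\le d-2$ against the free polynomial count on the $3d-7$ generators. Throughout I write $B:=\BP H^0(\BP^2,\CO_{\BP^2}(d))$ for the base of the Hilbert--Chow morphism $h$ of \eqref{Hilb-Chow}, and I use freely that $A^*(M_{d,\chi})$ is even and equals the Chow ring.

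For the first input of (a) I would establish full tautological generation by the standard diagonal argument: since $M_{d,\chi}$ is a fine moduli space, the class $[\Delta]\in A^*(M_{d,\chi}\times M_{d,\chi})$ is tautological (via a Fourier--Mukai/Atiyah-class resolution of the identity built from $\BF$), so its K\"unneth components express any $\gamma=(p_2)_*([\Delta]\cdot p_1^*\gamma)$ as a combination of the $c_k(j)$; this is the analogue for $M_{d,\chi}$ of the Markman-type generation quoted in the introduction. For the decisive second input I would run a Grothendieck--Riemann--Roch (GRR) computation. Writing the Fitting support of $\BF$ as a divisor $\CD\subset \BP^2\times M_{d,\chi}$ with $[\CD]=dH+\alpha$, $\alpha\in A^1(M_{d,\chi})$, and $\BF$ as the pushforward of a rank-one sheaf $\CG$ on $\CD$ with $\xi:=c_1(\CG)$, GRR expresses each $\ch_{k+1}(\BF)$ as a universal polynomial in $H$, $\alpha$, and $\xi$. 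Crucially $H^3=0$ on $\BP^2$, so the powers $[\CD]^m=\alpha^m+md\,H\alpha^{m-1}+\binom{m}{2}d^2H^2\alpha^{m-2}$ truncate; after integrating against $1,H,H^2$ every $c_k(j)$ becomes a polynomial in the degree-one class $\alpha$ (itself a combination of $c_0(2),c_2(0)$) and in the \emph{fiber classes} obtained by integrating the powers $\xi^i$ against $1,H,H^2$.

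The degree reduction is where the geometry enters. I would produce the bounding relations by restricting to a general line: over the dual plane $(\BP^2)^\vee$ the pushforward of $\BF$ along the universal line is a locally free sheaf of rank $d$, and comparing its Chern classes (which are tautological) with the constraints imposed by the plane embedding --- namely that the support curve carries a very ample $\CO_C(1)$ with $h^0(\CO_C(1))=3$ --- should force the fiber classes of degree $\ge d-1$ into the subalgebra generated in degree $\le d-2$. Concretely I expect one relation in each degree $\ge d-1$, obtained by expanding a Chern/Segre identity for this rank-$d$ bundle and isolating the top tautological class; feeding these back into the GRR expansion yields $R^{d-2}(M_{d,\chi})=A^*(M_{d,\chi})$ and pins down the minimal generating set \eqref{taut}.

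For (b) I would prove the $3d-7$ classes algebraically independent in degrees $\le d-2$ by a dimension count. Using the $\chi$-independence of the Betti numbers and their stabilization, I would compute $\dim_\BQ A^m(M_{d,\chi})$ for $m\le d-2$ --- for instance via a one-parameter subgroup of $\mathrm{PGL}_3$ and its Bia\l ynicki-Birula decomposition, or via the decomposition theorem for $h$ together with the support theorem quoted in the introduction --- and check that it equals the coefficient of $t^m$ in $\prod_i(1-t^{d_i})^{-1}$, the product over the degrees $d_i$ of the generators in \eqref{taut}. Since (a) already gives $\dim A^m\le \#\{\text{monomials}\}$, matching the two numbers forces the monomials to be linearly independent, hence no relation in degrees $\le d-2$; in particular the degree-$(d-2)$ generator $c_{d-1}(0)$ is not a polynomial in lower classes, which gives the strict inclusion $R^{d-3}\subsetneq R^{d-2}$ and the optimality in Theorem \ref{thm0}. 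The main obstacle is the degree-reduction step of (a) --- producing the precise tautological relations and verifying that the cutoff is \emph{exactly} $d-2$; the second delicate point is the Betti-number count in (b), which must be sharp precisely in the range $m\le d-2$.
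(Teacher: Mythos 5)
Your skeleton --- full tautological generation, then geometric relations pushing everything into degrees $\leq d-2$, then a Betti-number versus monomial-count match for (b) --- is the same as the paper's, and your part (b) is essentially the paper's argument except that you leave the source of the Betti numbers unspecified: the paper gets them from Yuan's theorem, which identifies $b_i(M_{d,\chi})$ for $i\leq 2d-4$ with Betti numbers of a Hilbert scheme of points on $\BP^2$, combined with G\"ottsche's formula; a Bia\l ynicki-Birula decomposition of $M_{d,\chi}$ is not a worked-out alternative, since the torus-fixed loci are themselves nontrivial moduli. The genuine gaps are both in the degree-reduction step of (a), which you rightly flag as the crux. The first is that the rank-$d$ bundle you propose does not exist globally. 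The pushforward of $p^*\BF\otimes q^*\CO_Z$ to $M_{d,\chi}\times\check{\BP}^2$ is locally free of rank $d$ only over the open locus of pairs $([\CF],p)$ for which the line $H_p$ is \emph{not} a component of $\mathrm{supp}(\CF)$; on the complementary locus (nonempty, e.g.\ sheaves supported on a line plus a residual curve) the restriction $\CF|_{H_p}$ has one-dimensional support, so $R^0r_*$ jumps and $R^1r_*\neq 0$. The vanishing $c_\ell=0$ for $\ell>d$ needs a complex that is a (shifted) vector bundle of constant rank $d$ \emph{everywhere}, so your construction yields no relations at all. This is precisely what the paper's choice $\CH(n)=R\hom(p^*\BF,\, q^*\CO_Z\otimes\pi_P^*\CO_{\BP^2}(-n))$ for $n\in\{1,2,3\}$ is engineered to fix: for these twists, slope stability kills $\mathrm{Hom}(\CF,\CO_{H_p}(-n))$ and, via Serre duality, $\mathrm{Ext}^2(\CF,\CO_{H_p}(-n))\simeq\mathrm{Hom}(\CO_{H_p}(-n),\CF(-3))^\vee$ at \emph{every} point, bad lines included, so $Rr_*\CH(n)$ is a shifted rank-$d$ bundle and $c_\ell(-Rr_*\CH(n))=0$ for all $\ell\geq d+1$.

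The second gap is that ``one relation in each degree $\geq d-1$'' cannot suffice, and the cutoff $d-2$ does not come for free. Each algebraic degree $m\geq 2$ contains \emph{three} new tautological classes ($e_{m+1}(0)$, $e_m(1)$, $e_{m-1}(2)$ in the paper's notation, equivalently your three families of fiber integrals against $1,H,H^2$), so one needs three relations per degree whose matrix of leading coefficients is nonsingular. Moreover, the vanishing $c_\ell=0$ for $\ell\geq d+1$, read off naively, only produces relations with leading terms in degree $\geq d$; reaching degree $d-1$ and killing all three classes there (plus the stubborn class $e_d(1)$ in degree $d$) is exactly where the paper must integrate its identity against $1,\beta,\beta^2$, vary $n\in\{1,2,3\}$, and verify nonvanishing of explicit determinants such as $\det(\mathsf{M})=\frac{1}{4(1-d)^2}\,\chi(d-2)(d-\chi)(d-2\chi)$, which crucially uses $\gcd(d,\chi)=1$ and the normalization $0<\chi<d$. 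Your phrase ``should force the fiber classes of degree $\geq d-1$ into the subalgebra'' is the statement to be proved, and the proposal supplies no mechanism for it. (A smaller issue: writing $\BF=i_{\CD*}\CG$ and invoking GRR presupposes $\CD$ smooth and $\CG$ locally free, both of which fail --- the support curves are singular and possibly non-reduced; the paper instead computes with $\mathrm{ch}(\BF)$ directly on the smooth ambient space $\BP^2\times M_{d,\chi}$.)
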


Theorem \ref{thm1} implies that any set of generators of $A^*(M_{d,\chi})$ contains at least $3d-7$ elements, and (\ref{taut}) provides a minimal one. It is clear that Theorem \ref{thm0} follows immediately from Theorem \ref{thm1}. As we discuss further in Section \ref{sec3.2}, numerical data for low values of $d$ suggest that there is no relation also in degree $d-1$ and nontrivial relations start in degree $d$.\footnote{See Remark \ref{llast} for an update on this.}

The proof of Theorem \ref{thm1} consists of the following main ingredients: (i) By a result of Beauville \cite{Beau} the ring $A^*(M_{d,\chi})$ is generated by tautological classes. (ii) Using the geometry of stable 1-dimensional sheaves we produce tautological relations; this allows us to express any tautological class in terms of the first $3d-7$ ones. (iii) Finally, we obtain the freeness of these $3d-7$ classes by a Betti number constriant where we apply a recent result of Yuan \cite{YY4}.

\subsection{Enhanced cohomological $\chi$-independence}\label{sec0.3}
The moduli spaces $M_{d,\chi}$ admits a mysterious symmetry predicted by the consideration from enumerative geometry \cite{PB, Toda, MS_GT}, called the cohomological $\chi$-independence. More precisely, it was proven in \cite{MS_GT} that, for any two integers $\chi, \chi'$ (not necessarily coprime to $d$), we have
\[
\mathrm{IH}^*(M_{d,\chi}) \simeq \mathrm{IH}^*(M_{d,\chi'})
\]
preserving the perverse and the Hodge filtrations. Here $\mathrm{IH}^*(-)$ stands for intersection cohomology. If we restrict ourselves to the case $(\chi,d) = (\chi',d) = 1$,  intersection cohomology coincide with singular cohomology which admits a canonical $\BQ$-algebra structure. It is natural to ask if in this case the cohomological $\chi$-independence can be strengthened to an isomorphism of $\BQ$-algebras.

\begin{question}\label{Q}
For any $\chi, \chi'$ coprime to $d$, is there an isomorphism 
\begin{equation*}\label{Q'}
H^*(M_{d,\chi}) \simeq H^*(M_{d,\chi'})
\end{equation*}
of $\BQ$-algebras?
\end{question}

Since we have obtained a minimal set of generators whose number is independent of $\chi$, it suffices to understand the dependence on $\chi$ of the tautological relations among (\ref{taut}).

The parallel statements of Question \ref{Q} and its enhancements hold for moduli of Higgs bundles as predicted by the $P=W$ conjecture; their proofs \cite{dCZ, dCSZ} rely on techniques in characteristic $p$.

\subsection{Acknowledgements}
 We would like to thank Pierrick Bousseau, Jinwon Choi, and Kiryong Chung for very helpful discussions, and in particular to Jinwon Choi for help on some numerical data relevant to the discussion in Section \ref{sec3.2}. We also thank the anonymous referee for careful reading of the paper. W.P. would like to thank Yuxiao Feng for a helpful code. J.S. was supported by the NSF grant DMS-2134315.

\section{Tautological classes and normalizations}
Throughout, we assume $d \geq 3$. We introduce (normalized) tautological classes for $M_{d,\chi}$. The construction of the normalization is parallel to \cite[Section 0.3]{dCMS}.

\subsection{Normalizations}\label{Sec1.1}
 Under the assumption $\mathrm{gcd}(d,\chi)=1$, there is a universal family (see \cite[Theorem 4.6.5]{HL}) over $\BP^2 \times M_{d,\chi}$, which we denote by $\BF$. Let
\[
\pi_P: \BP^2 \times M_{d,\chi} \to \BP^2, \quad \pi_M:  \BP^2 \times M_{d,\chi} \to M_{d,\chi}
\]
be the projections. The choice of $\BF$ is not unique; for another universal family $\BF'$ there exists a line bundle $L \in \mathrm{Pic}(M_{d,\chi})$ with 
\[
\BF = \BF' \otimes \pi_M^*L.
\]

Nevertheless, as a universal sheaf $\BF$ is a torsion sheaf on $\BP^2 \times M_{d,\chi}$ supported on a divisor, its first Chern character, which records its support, is uniquely determined.

\begin{lem}\label{lem1.1}
Let $\BF$ be a universal family. We have 
\[
\mathrm{ch}_1(\BF) = \widetilde{h}^*c_1(\CO(d,1)) \in A^1(\BP^2 \times M_{d,\chi}).
\]
Here $\widetilde{h}: \BP^2 \times M_{d,\chi} \to \BP^2 \times \BP H^0(\BP^2, \CO_{\BP^2}(d))$ is induced by (\ref{Hilb-Chow}). In particular $\mathrm{ch}_1(\BF) $ does not depend on the choice of $\BF$.
\end{lem}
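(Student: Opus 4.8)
\emph{Proof plan.} The statement has two parts: the explicit formula for $\mathrm{ch}_1(\BF)$ and its independence of the choice of $\BF$. I would dispose of the independence first, as it is formal. If $\BF = \BF'\otimes \pi_M^*L$, then $\mathrm{ch}(\BF) = \mathrm{ch}(\BF')\cdot \pi_M^*\mathrm{ch}(L)$ by multiplicativity of the Chern character; extracting the degree-one part gives $\mathrm{ch}_1(\BF) = \mathrm{ch}_1(\BF') + \mathrm{ch}_0(\BF')\,\pi_M^*c_1(L)$, and since $\BF$ is torsion we have $\mathrm{ch}_0(\BF') = \mathrm{rk}(\BF') = 0$, so $\mathrm{ch}_1(\BF) = \mathrm{ch}_1(\BF')$. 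Thus it suffices to compute $\mathrm{ch}_1(\BF)$ for a single universal family, and the answer is automatically twist-invariant.

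For the formula, the guiding principle is that $\mathrm{ch}_1(\BF) = c_1(\BF)$ is nothing but the class of the support of $\BF$ counted with its generic multiplicities, and that this support is the universal plane curve pulled back along $\widetilde h$. First I would identify the support cycle. Since $\BF$ is a torsion sheaf on the smooth variety $\BP^2\times M_{d,\chi}$ whose support has pure codimension one, the standard formula for the first Chern class of a coherent sheaf gives $\mathrm{ch}_1(\BF) = c_1(\BF) = \sum_i m_i[\CZ_i]$, where the $\CZ_i$ are the codimension-one components of $\mathrm{supp}(\BF)$ and $m_i$ is the length of $\BF$ at the generic point of $\CZ_i$. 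I would then observe that this support is irreducible with generic multiplicity $1$: the variety $M_{d,\chi}$ is irreducible, the support morphism $h$ is dominant onto $B := \BP H^0(\BP^2,\CO_{\BP^2}(d))$, and over the dense locus of smooth degree-$d$ curves the corresponding stable sheaf is a line bundle on its (smooth, hence integral) support. Consequently $\mathrm{supp}(\BF)$ has a single codimension-one component $\CZ$ with $m=1$, so $\mathrm{ch}_1(\BF) = [\CZ]$.

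Second, I would identify $\CZ$ with a pullback. Let $\CD\subset \BP^2\times B$ be the universal incidence divisor $\{(x,[C]):x\in C\}$, whose class is $c_1(\CO(d,1))$. By the definition of $h$ through the Fitting support, a point $(x,[\CF])$ lies in $\mathrm{supp}(\BF)$ if and only if $x\in \mathrm{supp}(\mathrm{Fitt}_0(\CF))$, i.e. if and only if $\widetilde h(x,[\CF])\in \CD$; hence $\mathrm{supp}(\BF) = \widetilde h^{-1}(\CD) = \CZ$ set-theoretically. Pulling back the Cartier divisor $\CD$ yields an effective divisor $\widetilde h^*\CD$ supported on $\CZ$, and restricting to a generic fiber $\BP^2\times[\CF]$ recovers the smooth (reduced) plane curve $\mathrm{Fitt}_0(\CF)$; therefore $\widetilde h^*\CD$ also has multiplicity $1$ along $\CZ$, so its class is $[\CZ]$. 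Combining the two steps gives $\mathrm{ch}_1(\BF) = [\CZ] = \widetilde h^*[\CD] = \widetilde h^*c_1(\CO(d,1))$.

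The main obstacle is the multiplicity bookkeeping in the second step: one must ensure that passing from the set-theoretic equality $\mathrm{supp}(\BF) = \widetilde h^{-1}(\CD)$ to an equality of divisor classes introduces no spurious multiplicity. This is precisely where the genericity input enters, guaranteeing that a general member of $M_{d,\chi}$ is a line bundle on a smooth curve and thereby forcing both the generic length of $\BF$ and the generic multiplicity of $\widetilde h^*\CD$ along $\CZ$ to equal $1$. Alternatively, one can bypass the irreducibility discussion by decomposing $A^1(\BP^2\times M_{d,\chi}) = A^1(\BP^2)\oplus A^1(M_{d,\chi})$: restricting to a fiber $\BP^2\times[\CF]$, where $\BF$ restricts to $\CF$ with $c_1(\CF)=dH$, pins the $\BP^2$-coefficient to $d$, and the remaining $M_{d,\chi}$-coefficient is then matched with $h^*c_1(\CO_B(1))$ via the same Fitting-support identification.
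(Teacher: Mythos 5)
Your proposal is correct and takes essentially the same approach as the paper: the paper's (one-sentence) proof likewise observes that $\BF$ is torsion supported on a divisor, so $\mathrm{ch}_1(\BF)$ recovers the class of its support, which is identified with the pullback under $\widetilde{h}$ of the universal incidence divisor of class $c_1(\CO(d,1))$. The extra bookkeeping you supply --- twist-independence via $\mathrm{ch}_0(\BF')=0$, and generic multiplicity one from the picture of a line bundle on a smooth (hence integral) plane curve --- is exactly the content the paper leaves implicit, and it is sound.
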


\begin{proof}
A universal sheaf $\BF$ is a torsion sheaf on $\BP^2 \times M_{d,\chi}$ supported on a divisor. Hence its first Chern character recovers its support, which is the pullback via $\widetilde{h}$ of the incidence variety whose class is given by
\[
c_1(\CO(d,1)) \in A^1\left( \BP^2 \times \BP H^0(\BP^2, \CO_{\BP^2}(d))\right). \qedhere
\]
\end{proof}

The class $\mathrm{ch}_2(\BF)$ is dependent on the choice of $\BF$, which we use to conduct the normalization. For a universal family $\BF$ and a class 
\[
\alpha= \pi_P^* \alpha_P + \pi_M^* \alpha_M \in A^1(\BP^2\times M_{d,\chi}), \quad \textup{with}~~ \alpha_P \in A^1(\BP^2), ~~~\alpha_M\in A^1(M_{d,\chi}),
\]
we consider the twisted Chern character
\[
\mathrm{ch}^\alpha(\BF) := \mathrm{ch}(\BF) \cdot \mathrm{exp}(\alpha),
\]
and we denote $\mathrm{ch}_k^\alpha(\BF)$ its degree $k$-part. For any $\gamma \in A^*(\BP^2)$, we set
\[
\int_\gamma \mathrm{ch}_k^\alpha(\BF): =\pi_{M*}\left( \pi_P^* \gamma \cdot \mathrm{ch}_k^\alpha(\BF)\right) \in A^*(M_{d,\chi}).
\]

\begin{prop}\label{prop1.2}
Let $\BF$ be a universal family. There exists a unique class $\alpha$ as above such that
\[
\int_{H} \mathrm{ch}_2^\alpha(\BF) = 0, \quad \int_{\mathbf{1}_{\BP^2}} \mathrm{ch}_2^\alpha(\BF) = 0.
\]
\end{prop}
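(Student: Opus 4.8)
The plan is to convert the two normalization equations into a triangular linear system in the coefficients of $\alpha$ and solve it directly. The first observation I would record is the structural simplification coming from the fact that $\BF$ is a torsion sheaf, so $\mathrm{ch}_0(\BF) = 0$. This means the degree-$2$ part of $\mathrm{ch}^\alpha(\BF) = \mathrm{ch}(\BF)\cdot\exp(\alpha)$ carries no $\alpha^2$-contribution, and hence
\[
\mathrm{ch}_2^\alpha(\BF) = \mathrm{ch}_2(\BF) + \mathrm{ch}_1(\BF)\cdot \alpha.
\]
This is the crucial point: both normalization conditions become \emph{affine-linear} in $\alpha$, so that existence and uniqueness reduce to the invertibility of a single linear map.

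Next I would make $\mathrm{ch}_1(\BF)$ explicit. Writing $A^1(\BP^2) = \BQ\cdot H$, I set $\alpha_P = aH$ with $a \in \BQ$ and keep $\alpha_M \in A^1(M_{d,\chi})$, so that the two unknowns are $a$ and $\alpha_M$. By Lemma \ref{lem1.1}, decomposing $\CO(d,1)$ gives
\[
\mathrm{ch}_1(\BF) = d\,\pi_P^* H + \pi_M^* \eta, \qquad \eta := h^* c_1\!\left(\CO_{\BP H^0(\BP^2,\CO_{\BP^2}(d))}(1)\right) \in A^1(M_{d,\chi}).
\]
I would then expand $\mathrm{ch}_1(\BF)\cdot\alpha$ into the terms $\pi_P^*H^2$, $\pi_P^*H\cdot\pi_M^*(-)$, and $\pi_M^*(-)$, and evaluate the two integrals $\int_{\mathbf 1_{\BP^2}}(-) = \pi_{M*}(-)$ and $\int_H(-) = \pi_{M*}(\pi_P^*H\cdot(-))$ using the projection formula, together with the two standard facts that $\pi_{M*}\pi_P^*\beta = (\int_{\BP^2}\beta)\cdot 1_M$ vanishes unless $\beta$ is top-dimensional on $\BP^2$, and that $H^3 = 0$. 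Abbreviating $u := \int_{\mathbf 1_{\BP^2}}\mathrm{ch}_2(\BF) \in A^0(M_{d,\chi}) = \BQ$ and $v := \int_H \mathrm{ch}_2(\BF) \in A^1(M_{d,\chi})$, this computation yields
\[
\int_{\mathbf 1_{\BP^2}} \mathrm{ch}_2^\alpha(\BF) = u + d\,a, \qquad \int_H \mathrm{ch}_2^\alpha(\BF) = v + a\,\eta + d\,\alpha_M.
\]

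Finally I would read off existence and uniqueness from the triangular shape of this system. The first equation involves only $a$ and forces $a = -u/d$; substituting into the second forces $\alpha_M = -\tfrac1d\bigl(v + a\eta\bigr)$, and both are uniquely determined precisely because $d \neq 0$. I do not expect a genuine obstacle here: the entire content lies in the bookkeeping of the pushforward computation and in the torsion-vanishing of the quadratic term, after which the resulting (block-)triangular $2\times 2$ system is automatically invertible. The one place to be careful is the degree bookkeeping — since $\mathrm{ch}_2^\alpha(\BF) \in A^2(\BP^2\times M_{d,\chi})$, the integral $\int_{\mathbf 1_{\BP^2}}$ lands in $A^0 = \BQ$ while $\int_H$ lands in $A^1(M_{d,\chi})$ — so that each equation is posed in the correct group and $a$ is solved as a scalar before $\alpha_M$ is solved as a divisor class.
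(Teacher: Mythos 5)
Your proposal is correct and follows essentially the same route as the paper's proof: both use that $\BF$ is torsion (so $\mathrm{ch}_0(\BF)=0$) to reduce the two conditions to equations affine-linear in $\alpha$, substitute $\mathrm{ch}_1(\BF) = d\,\pi_P^*H + \pi_M^*D_0$ from Lemma \ref{lem1.1}, and solve the resulting triangular system --- the scalar coefficient of $\pi_P^*H$ from the $\mathbf{1}_{\BP^2}$-equation first, then the $M_{d,\chi}$-component from the $H$-equation, with unique solvability coming from $d\neq 0$. The only divergence is minor and in your favor: the paper expands $\alpha_M$ in the basis $D_0, D_1$ of $A^1(M_{d,\chi})$, which requires citing the rank-two Picard group result of \cite{LeP, Woolf}, whereas you solve for $\alpha_M$ intrinsically as a divisor class, so that input is never needed.
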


\begin{proof}
We have
\[
\mathrm{ch}_2^\alpha(\BF) = \mathrm{ch}_2(\BF) + \alpha \cdot \mathrm{ch}_1(\BF).
\]
Hence the conditions read
\begin{equation}\label{1.2}
    \int_H \mathrm{ch}_2(\BF) = -\int_H \alpha \cdot \mathrm{ch}_{1}(\BF), \quad \int_{\mathbf{1}_{\mathbb{P}^2}} \mathrm{ch}_2(\BF) = - \int_{\mathbf{1}_{\mathbb{P}^2}} \alpha \cdot \mathrm{ch}_1(\BF).
\end{equation}
By \cite{LeP, Woolf}, the Picard group of $M_{d,\chi}$ is spanned by 2 classes, which we denote by $D_0$ and $D_1$. Without loss of generality we set $D_0$ to be the pullback of the hyperplane class via (\ref{Hilb-Chow}) and $D_1$ to be a relative ample class. Therefore Lemma \ref{lem1.1} implies that 
\[
\mathrm{ch}_1(\BF) = d \cdot \pi_P^*H + \pi_M^* D_0.
\]
We may assume that 
\[
\alpha = \lambda_1 \cdot \pi_P^*H + \lambda_2 \cdot\pi_M^* D_0 + \lambda_3 \cdot \pi_M^* D_1.
\]
Then by a direct calculation, $\lambda_1$ is  determined by the second equation of (\ref{1.2}) and then the first equation determines $\lambda_2, \lambda_3$.
\end{proof}

We define the tautological class
\[
c_k(j) : = \int_{H^j} \mathrm{ch}^\alpha_{k+1} (\BF) \in A^{k+j-1}(M_{d,\chi})
\]
where $\alpha$ is the class characterized by Proposition \ref{prop1.2}.

\subsection{Properties}
We first summarize some properties of the tautological classes. The following is an immediate consequence of Lemma \ref{lem1.1} and Proposition \ref{prop1.2}.

\begin{prop}\label{prop1.3}
Let $c_k(j)$ be the tautological classes defined above from a universal family $\BF$ over $\BP^2 \times M_{d,\chi}$. Then:
\begin{enumerate}
    \item[(a)] $c_k(j)$ does not depend on the choice of $\BF$.
    \item[(b)] We have 
    \[
    c_1(0) = 0 \in A^0(M_{d,\chi}), \quad c_1(1) = 0 \in A^1(M_{d,\chi}), \quad c_0(1)= d \in A^0(M_{d,\chi}).
    \]
    \item[(c)] The Picard group of $M_{d,\chi}$ is spanned by $c_0(2)$ and $c_2(0)$; the class $c_0(2)$ recovers $D_0$ in the proof of Proposition \ref{prop1.2} and $c_2(0)$ is a relative ample class with respect to (\ref{Hilb-Chow}).
\end{enumerate}
\end{prop}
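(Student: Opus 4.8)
The plan is to deduce all three assertions from the uniqueness of the normalization in Proposition \ref{prop1.2} together with the explicit expression $\ch_1(\BF) = d\,\pi_P^*H + \pi_M^* D_0$ recorded in its proof. For (a), recall that any two universal families differ by a twist $\BF' = \BF \otimes \pi_M^* L$ with $L \in \Pic(M_{d,\chi})$, so that $\ch(\BF') = \ch(\BF)\cdot \exp(\pi_M^* c_1(L))$ and therefore $\ch^{\alpha'}(\BF') = \ch^{\alpha' + \pi_M^* c_1(L)}(\BF)$ for every admissible twist $\alpha'$. Since $\pi_M^* c_1(L)$ is again of the form $\pi_M^*\alpha_M$, the two normalizing conditions of Proposition \ref{prop1.2} for the pair $(\BF', \alpha')$ coincide with those for $(\BF, \alpha' + \pi_M^* c_1(L))$; by uniqueness the normalizing twist of $\BF'$ must be $\alpha' = \alpha - \pi_M^* c_1(L)$, where $\alpha$ normalizes $\BF$. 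Substituting back gives $\ch^{\alpha'}(\BF') = \ch^{\alpha}(\BF)$, so the normalized twisted Chern characters, and hence all the $c_k(j)$, are independent of the universal family.

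For (b), note first that $\BF$ is torsion, so $\ch_0(\BF)=0$ and $\ch_1^\alpha(\BF)=\ch_1(\BF)$. The vanishings $c_1(0) = \int_{\mathbf{1}_{\BP^2}}\ch_2^\alpha(\BF) = 0$ and $c_1(1)=\int_H \ch_2^\alpha(\BF)=0$ are exactly the two defining conditions of $\alpha$. For the remaining identity I would compute, using $\ch_1(\BF)=d\,\pi_P^*H+\pi_M^*D_0$ and the projection formula,
\[
c_0(1) = \int_H \ch_1(\BF) = d\,\pi_{M*}(\pi_P^* H^2) + D_0\cdot \pi_{M*}(\pi_P^* H) = d,
\]
since $\pi_{M*}(\pi_P^* H^2)=1$ (the point class in $\BP^2$ maps isomorphically onto $M_{d,\chi}$) and $\pi_{M*}(\pi_P^* H)=0$ for dimension reasons.

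For (c), the same kind of computation yields
\[
c_0(2) = \int_{H^2}\ch_1(\BF) = d\,\pi_{M*}(\pi_P^* H^3) + D_0\cdot\pi_{M*}(\pi_P^* H^2) = D_0,
\]
using $H^3=0$ on $\BP^2$; this identifies $c_0(2)$ with $D_0$. By \cite{LeP, Woolf} the group $\Pic(M_{d,\chi})$ has rank two, spanned by $D_0$ and a relative ample class $D_1$, so it remains only to show that $c_2(0)=\pi_{M*}\ch_3^\alpha(\BF)$ is not proportional to $D_0$. Since $D_0$ is pulled back from the base of $h$ and hence restricts trivially to every fiber, it suffices to prove that $c_2(0)$ restricts to a nonzero, indeed ample, class on a general fiber. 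I would verify this over a smooth curve $C$ in the linear system, where the fiber is a Jacobian and $\BF$ restricts to a twist of a Poincar\'e sheaf; pushing $\ch_3$ forward along $C$ produces, up to the base classes removed by the normalization, the ample theta-type polarization $\tfrac12\int_C c_1(\text{Poincar\'e})^2$ on the Jacobian. This shows $c_2(0)$ is relatively ample, so $\{c_0(2), c_2(0)\}$ spans $\Pic(M_{d,\chi})$.

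Parts (a) and (b) are formal consequences of the normalization, so I expect the main obstacle to be the relative ampleness claim in (c): establishing that $c_2(0)$ is genuinely relatively ample, rather than merely a divisor transverse to $D_0$, requires either the theta computation on the Jacobian fibers sketched above or an appeal to Le Potier's determinant line bundle $\det \pi_{M!}(\BF\otimes\pi_P^* V)$, whose relative ampleness is built into the GIT construction of $M_{d,\chi}$ and whose first Chern class equals $c_2(0)$ modulo multiples of $D_0$.
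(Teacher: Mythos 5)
Your proposal is correct and takes essentially the same route as the paper: parts (a), (b) and the identity $c_0(2)=D_0$ are formal consequences of Lemma \ref{lem1.1} together with the uniqueness of the normalizing class $\alpha$ in Proposition \ref{prop1.2}, and the key point for (c) is, exactly as in the paper, that $c_2(0)$ restricts nontrivially (as a theta-type class) to a Jacobian fiber $h^{-1}([C])$, using the Poincar\'e bundle $\CP$ and the nontrivial K\"unneth component of $c_1(\CP)^2$. You merely spell out the formal computations (the twist bookkeeping in (a) and the projection-formula evaluations in (b), (c)) that the paper leaves implicit.
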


\begin{proof}
The first two claims and $c_0(2) = D_0$ follow from the definition together with Lemma \ref{lem1.1} and Proposition \ref{prop1.2}. We now prove that $c_2(0)$ is a relative ample class. Since the Picard group of $M_{d,\chi}$ is spanned by 2 classes, it suffices to verify that the restriction of $c_2(0)$ to a smooth fiber of $h$ is non-trivial in cohomology.

We take $C \subset \BP^2$ a nonsingular degree $d$ curve which represents a point $[C]$ on the target of $h$. The fiber $h^{-1}([C])$ is isomorphic to the Jacobian $\mathrm{Jac}(C)$. Moreover, the restriction of a universal family to $\BP^2 \times \mathrm{Jac}(C)$ is recovered by $i_* \left(\CP \otimes \pi_C^*L_C\right)$ where $i$ is the embedding $C\times \mathrm{Jac}(C) \hookrightarrow \BP^2 \times \mathrm{Jac}(C)$, $\CP$ is the normalized Poincar\'e line bundle on $C \times \mathrm{Jac}(C)$, and $L_C$ is a line bundle on $C$. The desired property follows from the fact that $c_1(\CP)^2$ has nontrivial K\"unneth component in $H^2(C)\otimes H^2(\mathrm{Jac}_C)$.
\end{proof}

Secondly, we note that the tautological classes behave nicely under the following two types of symmetry carried by the moduli spaces $M_{d,\chi}$:
\begin{enumerate}
    \item[(i)] The first type of symmetry is given by the isomorphism
    \[
    \phi_1: M_{d,\chi} \xrightarrow{\simeq} M_{d,\chi+d}, \quad \CF \mapsto \CF\otimes \CO_{\BP^2}(1).
    \]
    \item[(ii)] The second type of symmetry is given by the isomorphism \cite{Mai10}
    \[
    \phi_2: M_{d,\chi} \xrightarrow{\simeq} M_{d, -\chi}, \quad \CF \mapsto \CE \kern -1.5 pt \mathit{xt}^1(\CF, \omega_{\BP^2}).
    \]
\end{enumerate}

\begin{prop}\label{prop1.4}
We have
\[
\phi_1^* c_k(j) = c_k(j), \quad \phi_2^* c_k(j) = (-1)^kc_k(j).
\]
\end{prop}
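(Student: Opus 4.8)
The plan is to track how each symmetry acts on a universal family, pass this through the normalized twisted Chern character, and then invoke the uniqueness part of Proposition \ref{prop1.2} to conclude. The key observation is that both $\phi_1$ and $\phi_2$ are built out of operations whose effect on $\mathrm{ch}(\BF)$ is explicitly computable, and since the tautological classes are independent of the choice of universal family (Proposition \ref{prop1.3}(a)), I am free to choose convenient universal families on each side.

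For the first symmetry, let $\Phi_1 = \mathrm{id}_{\BP^2} \times \phi_1$ be the induced isomorphism on $\BP^2 \times M_{d,\chi}$. If $\BF'$ is a universal family on $\BP^2 \times M_{d,\chi+d}$, then $\Phi_1^* \BF' \otimes \pi_P^*\CO_{\BP^2}(1)$ is a universal family on $\BP^2 \times M_{d,\chi}$, since tensoring by $\CO_{\BP^2}(1)$ on each fiber is exactly the modular operation defining $\phi_1$. Writing $\BF$ for the normalized family on the source, one has $\mathrm{ch}(\BF) = \mathrm{ch}(\Phi_1^*\BF') \cdot \mathrm{exp}(\pi_P^* H)$ up to a twist $\pi_M^* L$ coming from nonuniqueness. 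The twist $\mathrm{exp}(\pi_P^* H)$ is precisely of the form $\mathrm{exp}(\alpha)$ with $\alpha = \pi_P^* H$, so after applying the normalization procedure the ambiguity is absorbed; concretely, since $\pi_P^* H$ is one of the allowed normalizing classes $\alpha$ and the normalization is characterized uniquely, the normalized twisted Chern characters match: $\mathrm{ch}^{\alpha}_{k+1}(\BF) = \Phi_1^* \mathrm{ch}^{\alpha'}_{k+1}(\BF')$. Integrating over $H^j$ and using $\phi_1^* \circ \pi_{M*} = \pi_{M*} \circ \Phi_1^*$ (base change / projection compatibility for the isomorphism $\Phi_1$) gives $\phi_1^* c_k(j) = c_k(j)$ with no sign.

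For the second symmetry, the relevant computation is the effect of the relative dualizing functor $\CF \mapsto \ext^1(\CF, \omega_{\BP^2})$ on Chern characters. The essential input is Grothendieck--Serre duality: applying $R\hom(-, \omega_{\BP^2}[2])$ (relatively over $M_{d,\chi}$) sends the universal family $\BF$ to a complex whose only nonzero cohomology sheaf is the family of duals, and on the level of Chern characters duality acts as $\mathrm{ch}_k \mapsto (-1)^k \mathrm{ch}_k$ (the alternating sign coming from the dual of a complex), modified by Todd-class contributions from $\omega_{\BP^2}$. Because $\omega_{\BP^2} = \pi_P^*\CO_{\BP^2}(-3)$ contributes only classes pulled back from $\BP^2$, i.e. again a twist of the permitted form $\mathrm{exp}(\alpha)$ with $\alpha \in \pi_P^* A^1(\BP^2)$, the uniqueness in Proposition \ref{prop1.2} forces these corrections into the normalization and leaves $\phi_2^* \mathrm{ch}^\alpha_{k+1}(\BF) = (-1)^{k+1} \mathrm{ch}^{\alpha}_{k+1}(\BF)$ after matching normalized families. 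One must then check that integration over $H^j$ preserves this sign: since the duality does not mix the $\BP^2$-direction in a way that alters $j$, the class $c_k(j) = \int_{H^j} \mathrm{ch}^\alpha_{k+1}(\BF)$ acquires the sign $(-1)^{k+1}$ from $\mathrm{ch}_{k+1}$; reconciling this with the claimed $(-1)^k$ requires care with index conventions, and I expect the stated exponent to emerge once the degree shift between $\mathrm{ch}_{k+1}$ and $c_k$ is accounted for consistently.

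The main obstacle is the bookkeeping for $\phi_2$: one must verify that all the Todd-class and dualizing-sheaf corrections genuinely lie in the span of pullbacks from $\BP^2$ together with classes of the form $\pi_M^* A^1$, so that they are exactly the ambiguity killed by normalization, rather than contributing a genuine change to the normalized classes. The clean way to organize this is to argue abstractly: $\phi_2^* c_k(j)$ is again a tautological class built from \emph{some} universal family on $M_{d,\chi}$ via \emph{some} twist $\alpha$, and by the uniqueness of the normalization it must equal $\pm c_k(j)$; the sign is then pinned down by computing the leading (untwisted) term $\mathrm{ch}_{k+1}$ under duality, which is unambiguously $(-1)^{k+1}$, together with the fixed degree relabeling. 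This reduces the whole proof to one sign computation rather than a full Grothendieck--Riemann--Roch expansion.
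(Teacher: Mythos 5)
Your overall strategy --- identify the pullback of a universal family under each symmetry, compute its Chern character, and use the uniqueness in Proposition \ref{prop1.2} to absorb all exponential discrepancies into the normalization --- is exactly the approach of the paper, and your treatment of $\phi_1$ is correct (the slip of twisting by $\CO_{\BP^2}(1)$ rather than $\CO_{\BP^2}(-1)$ is immaterial, since either discrepancy is an allowed normalizing class and is killed by uniqueness).

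The $\phi_2$ part, however, has a genuine gap, and it is precisely the one nontrivial sign in the statement. The modular operation defining $\phi_2$ is $\CF \mapsto \ext^1(\CF,\omega_{\BP^2})$, so the pullback of a universal family on $M_{d,-\chi}$ along $\mathrm{id}\times\phi_2$ is the \emph{shifted} derived dual $R\hom(\BF,\pi_P^*\omega_{\BP^2})[1]$, up to a twist from $\Pic(M_{d,\chi})$: for a pure $1$-dimensional sheaf the derived dual is concentrated in cohomological degree $1$, not degree $0$. In K-theory this class is $-\left[R\hom(\BF,\pi_P^*\omega_{\BP^2})\right]$, so the duality sign $(-1)^{k+1}$ on $\mathrm{ch}_{k+1}$ is multiplied by an extra factor of $-1$ coming from the shift, giving $(-1)^{k}$ overall; explicitly, $\mathrm{ch}$ of the pulled-back family equals $\bigl(\sum_{k\geq 0}(-1)^{k}\mathrm{ch}_{1+k}(\BF)\bigr)\cdot \exp\bigl(\pi_P^*c_1(\omega_{\BP^2})\bigr)$ up to an allowed exponential twist. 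Your computation lands on $(-1)^{k+1}$ and defers the reconciliation to ``index conventions'' and ``the degree shift between $\mathrm{ch}_{k+1}$ and $c_k$''; but that relabeling contributes no sign whatsoever, so as written your argument proves the wrong sign, and your proposed abstract shortcut --- which declares the sign ``unambiguously $(-1)^{k+1}$'' --- would cement the error rather than fix it. The missing idea is the homological shift $[1]$. Two smaller points: there are no Todd-class corrections here, since the dualization is fiberwise on $\BP^2$ and involves no pushforward (the only correction is $\exp(\pi_P^*c_1(\omega_{\BP^2}))$ from the twist by $\omega_{\BP^2}$); and to see that the sign survives normalization, observe that the involution $\mathrm{ch}_m\mapsto(-1)^m\mathrm{ch}_m$ is a ring automorphism sending $\exp(\gamma)$ to $\exp(-\gamma)$ and acting trivially in degree $2$, so it carries the normalized twisted character of $\BF$ to the normalized twisted character of the dual family, and the sign $(-1)^k$ on $\mathrm{ch}^\alpha_{k+1}$ then passes directly to $c_k(j)$.
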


\begin{proof}
The first identity follows from the fact that the pullback of a universal family over $M_{d,\chi+d}$ along
\[
\mathrm{id} \times \phi_1: \BP^2\times M_{d,\chi} \rightarrow \BP^2 \times M_{d,\chi+d}
\]
is of the form $\BF \otimes \pi_P^* \CO_{\BP^2}(1)$ with $\BF$ a universal family for the moduli space $M_{d,\chi}$. To see the second identity, we note that the pullback of a universal family over $M_{d,-\chi}$ along $\mathrm{id}\times \phi_2$ is of the form $R\CH \kern -1.2 pt\mathit{om}(\BF, \pi_P^*\omega_{\BP^2})[1]$ in the bounded derived category; see \cite{Mai10}. Its Chern character is
\[
\mathrm{ch}\left(-R\CH \kern -1.2 pt \mathit{om}(\BF, \pi_ P^*\omega_{\BP^2})\right) = \left( \sum_{i\geq 0} (-1)^k\mathrm{ch}_{1+k}(\BF) \right) \cdot \mathrm{exp}\left( \pi_P^*c_1(\omega_{\BP^2})\right).
\]
The claim then follows from definition of the normalization.
\end{proof}

We conclude this section by recalling the following theorem by  Beauville \cite{Beau}; see also Ellingsrud--Str\o{}mme \cite{ES}.

\begin{thm}[\cite{Beau}]\label{thm1.5}
The ring $A^*(M_{d,\chi})$ is generated over $\BQ$ by the tautological classes $c_k(j)$.
\end{thm}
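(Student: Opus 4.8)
The plan is to deduce the statement from the GIT construction of $M_{d,\chi}$ together with descent of tautological classes along the quotient map, which is in essence the strategy of Beauville and of Ellingsrud--Str{\o}mme. First I would fix an integer $m \gg 0$ so that every stable sheaf $\CF$ with the prescribed invariants is $m$-regular; then $H^0(\CF(m))$ has constant dimension $N = \chi + dm$, and the evaluation map gives a surjection $\CO_{\BP^2}(-m)^{\oplus N} \twoheadrightarrow \CF$. This exhibits all such $\CF$ as points of a Quot scheme $Q = \mathrm{Quot}(\CO_{\BP^2}(-m)^{\oplus N}, P)$, with $P$ the Hilbert polynomial of $\CF$. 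Let $R \subset Q$ be the smooth, $\mathrm{GL}_N$-invariant open locus where the quotient is stable and $\BC^N \to H^0(\CF(m))$ is an isomorphism. The group $\mathrm{GL}_N$ acts on $R$ and the standard GIT picture gives a geometric quotient $M_{d,\chi} = R \git \mathrm{GL}_N$; since $\gcd(d,\chi)=1$ stability equals semistability, the induced $\mathrm{PGL}_N$-action on $R$ is free, and $R \to M_{d,\chi}$ is a principal $\mathrm{PGL}_N$-bundle.

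The universal quotient $\widetilde{\BF}$ on $\BP^2 \times Q$ restricts to a $\mathrm{GL}_N$-equivariant family on $\BP^2 \times R$. Because $\gcd(d,\chi)=1$, this descends, after twisting by a suitable power of the determinant line bundle, to an honest universal family $\BF$ on $\BP^2 \times M_{d,\chi}$; the indeterminacy of this twist is precisely the ambiguity $\BF \mapsto \BF \otimes \pi_M^* L$ recorded earlier. Using equivariant intersection theory I would identify
\[
A^*(M_{d,\chi}) \;\cong\; A^*_{\mathrm{PGL}_N}(R),
\]
which is valid since $R \to M_{d,\chi}$ is a principal bundle (Edidin--Graham; cf.\ the descent results of Ellingsrud--Str{\o}mme). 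It then suffices to show that $A^*_{\mathrm{GL}_N}(R)$ is generated by the Chern classes of $\widetilde{\BF}|_R$ together with the equivariant parameter of the center $\BC^* \subset \mathrm{GL}_N$; the latter descends to a power of the tautological line bundle and is therefore already accounted for by the Picard generators $c_0(2), c_2(0)$ of Proposition \ref{prop1.3}.

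For the generation of $A^*_{\mathrm{GL}_N}(R)$ I would pass through the ambient Quot scheme. By the localization sequence, the restriction $A^*_{\mathrm{GL}_N}(Q) \to A^*_{\mathrm{GL}_N}(R)$ is surjective, so it is enough to treat $Q$. Choosing $\ell \gg 0$, $m$-regularity embeds $Q$ equivariantly as a closed subscheme of a Grassmannian of quotients of $H^0\big(\CO_{\BP^2}(-m)^{\oplus N}(m+\ell)\big)$, under which $\widetilde{\BF}$ becomes the restriction of the tautological quotient bundle. Since the Chow ring of a Grassmannian is generated by Chern classes of its tautological bundles, and these pull back to Chern classes of $\pi_{M*}(\widetilde{\BF}(m+\ell))$, the ring $A^*_{\mathrm{GL}_N}(Q)$ is generated by classes built from $\widetilde{\BF}$. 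Descending along $R \to M_{d,\chi}$, each generator becomes an integral of $\mathrm{ch}(\BF)$ against a class on $\BP^2$; as $A^*(\BP^2)$ is spanned by $1, H, H^2$, the Künneth and Grothendieck--Riemann--Roch bookkeeping expresses every such class through the $\int_{H^j}\mathrm{ch}_{k+1}(\BF) = c_k(j)$, after the normalization of Proposition \ref{prop1.2}. This gives the claim.

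The step I expect to be the main obstacle is the generation of $A^*_{\mathrm{GL}_N}(Q)$ by Chern classes of $\widetilde{\BF}$: one must control the geometry of the Quot scheme well enough, via the equivariant Grassmannian embedding and a careful application of $m$-regularity and Grothendieck--Riemann--Roch for $\pi_M$, to ensure that no further generators are needed and that the push-forwards of $\mathrm{ch}(\widetilde{\BF})$ genuinely capture all the tautological classes of the Grassmannian. By contrast, the descent along $R \to M_{d,\chi}$ and the final matching with the normalized classes $c_k(j)$ are comparatively formal once this point is secured.
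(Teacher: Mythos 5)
Your proposal does not reconstruct the paper's argument, and it contains a genuine gap. First, for context: the paper does not reprove the generation statement at all. It treats Theorem \ref{thm1.5} as Beauville's theorem, citing \cite{Beau} for the fact that the unnormalized classes $\int_{H^j}\mathrm{ch}_{k+1}(\BF)$ generate $A^*(M_{d,\chi})$, and its ``proof'' is only the bookkeeping (the expansion underlying Proposition \ref{prop2.2}, plus Proposition \ref{prop1.3}(c) for degree-one classes) showing that the normalized classes $c_k(j)$ generate the same subring. Your final paragraph handles that bookkeeping adequately; the problem is that your attempt to supply the actual content of Beauville's theorem does not work.

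The gap is exactly at the step you flag as the main obstacle. You deduce that $A^*_{\mathrm{GL}_N}(Q)$ is tautologically generated from an equivariant \emph{closed} embedding of $Q$ into a Grassmannian. But restriction along a closed embedding is not surjective on Chow rings or cohomology; only restriction to \emph{open} subschemes is surjective, by excision --- which is the step you use correctly for $A^*_{\mathrm{GL}_N}(Q) \to A^*_{\mathrm{GL}_N}(R)$. A smooth plane curve $C \subset \BP^2$ of positive genus already exhibits the failure: the image of $A^1(\BP^2) \to A^1(C) = \Pic(C)\otimes\BQ$ is just the line spanned by $\CO_C(1)$. So tautological generation of $A^*(\mathrm{Gr})$ tells you nothing about $A^*_{\mathrm{GL}_N}(Q)$: in the composite $A^*_{\mathrm{GL}_N}(\mathrm{Gr}) \to A^*_{\mathrm{GL}_N}(Q) \to A^*_{\mathrm{GL}_N}(R)$ the first map is not surjective and the second is, which gives no control over the composite. (There is also the secondary issue that $Q$ is in general singular, so ``the ring $A^*_{\mathrm{GL}_N}(Q)$'' needs interpretation.) This is not a repairable technicality: it is precisely why Kirwan-type surjectivity is not automatic for moduli of sheaves on surfaces --- unlike the gauge-theoretic setting for curves, there is no equivariantly trivial ambient space whose equivariant theory is manifestly tautological. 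Beauville's actual proof avoids the Quot scheme entirely and runs through the diagonal: for stable one-dimensional sheaves $F \not\cong G$ of equal slope, stability gives $\mathrm{Hom}(F,G)=0$ and Serre duality gives $\Ext^2(F,G)\simeq \mathrm{Hom}(G,F(-3))^\vee=0$ (the same vanishing mechanism as in Lemma \ref{K^1} of the paper); hence the relative Ext complex of $\BF$ against itself on $M_{d,\chi}\times M_{d,\chi}$ is represented by a two-term complex of vector bundles degenerating exactly on the diagonal, a Chern-class argument expresses $[\Delta]$ as a polynomial in K\"unneth components of tautological classes via Grothendieck--Riemann--Roch, and then $\alpha = p_{2*}\left([\Delta]\cdot p_1^*\alpha\right)$ yields generation. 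If you want a from-scratch proof, that is the argument to reconstruct; your GIT/descent frame (the identification $A^*(M_{d,\chi})\simeq A^*_{\mathrm{PGL}_N}(R)$ via Edidin--Graham is fine) cannot be completed as written.
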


\begin{proof}
Take a universal family $\BF$ over $\BP^2 \times M_{d,\chi}$. It was proven in \cite{Beau} that $A^*(M_{d,\chi})$ is generated by the classes
\[
\int_{H^j} \mathrm{ch}_{k+1}(\BF) = \pi_{M*}\left( \pi_P^* H^j \cdot \mathrm{ch}_{k+1}(\BF) \right) \in A^*(M_{d,\chi}).
\]
By a direct calculation (see also Proposition \ref{prop2.2}), these classes can be expressed in terms of the normalized tautological classes $c_k(j)$ and the classes in $A^1(M_{d,\chi})$; the latter are also tautological by Proposition \ref{prop1.3} (c).
\end{proof}

\begin{rmk}\label{rmk1.6}
As an immediate consequence of Theorem \ref{thm1.5}, we obtain that $H^{2k+1}(M_{d,\chi}) = 0$ for any $k$; moreover $H^{i,j}(M_{d,\chi}) =0$ if $i \neq j$. Using the $\chi$-independence result \cite{MS_GT}, this further implies that \[
\mathrm{IH}^{i,j}(M_{d,\chi}) = 0, \quad \textup{if}~~ i\neq j
\]
for any $\chi$ not necessarily coprime to $d$. We refer to \cite[Theorem 0.4.1]{PB} for another proof of this result.
\end{rmk}

In view of Theorem \ref{thm1.5}, we explain in Section \ref{Sec2} a method to extract a minimal set of generators from all tautological classes.

\subsection{An example: $d=3$}\label{deg3} We work out the case $d=3$ in detail and prove Theorem \ref{thm1} in this case. By the symmetry of Proposition \ref{prop1.4}, we only need to treat the case $\chi= -1$. We write $M: = M_{3,-1}$ for notational convenience.

\begin{prop}\label{main} We have
\[
A^*(M_{3,-1}) = \BQ[c_0(2), c_2(0)]/I
\]
where $I$ is a homogeneous ideal generated by two elements in degrees 3 and 9 respectively.
\end{prop}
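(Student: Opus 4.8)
The plan is to present $A^*(M)$ as a quotient of a polynomial ring in the two Picard generators and then pin down the defining ideal using Poincar\'e duality together with a small amount of Betti number input.

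\emph{Step 1: generation by the two divisor classes.} Write $x = c_0(2)$ and $y = c_2(0)$, which by Proposition \ref{prop1.3}(c) span $A^1(M)$. By Theorem \ref{thm1.5} the ring $A^*(M)$ is generated by the tautological classes $c_k(j)$, so it suffices to express every $c_k(j)$ as a polynomial in $x$ and $y$. I would do this by producing tautological relations from the geometry of stable $1$-dimensional sheaves on $\BP^2$ --- the $d=3$ instance of ingredient (ii) in the strategy for Theorem \ref{thm1} --- collapsing all tautological generators onto the two Picard classes. This yields a surjective graded $\BQ$-algebra homomorphism $\phi\colon \BQ[x,y]\twoheadrightarrow A^*(M)$ with $\deg x = \deg y = 1$, and $I := \ker\phi$ is a homogeneous ideal.

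\emph{Step 2: the additive structure.} The variety $M$ is smooth projective of dimension $d^2+1 = 10$, and by Remark \ref{rmk1.6} it has no odd cohomology, so $\dim_\BQ A^k(M)$ equals the $2k$-th Betti number. I would read off the low-degree Betti numbers of $M_{3,-1}$ from the computation of Yuan \cite{YY4} (equivalently, via $\chi$-independence and the known descriptions of $M_{3,\pm 1}$), obtaining the Hilbert function $(1,2,3,3,3,3,3,3,3,2,1)$ in degrees $0,\dots,10$; in particular $\dim A^2(M) = \dim A^3(M) = 3$. A direct expansion then gives $(1-t)^2\sum_k \dim A^k(M)\,t^k = (1-t^3)(1-t^9)$.

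\emph{Step 3: the ideal is a complete intersection.} Since $M$ is smooth and projective, Poincar\'e duality makes $A^*(M)$ a graded Artinian Gorenstein $\BQ$-algebra with one-dimensional socle in top degree $10$. As $A^*(M) = \BQ[x,y]/I$ is a quotient of a polynomial ring in two variables, $I$ has codimension $2$, and a codimension-two Gorenstein ideal is a complete intersection; hence $I = (f,g)$ is generated by a regular sequence. Comparing Hilbert series, $(1-t^{\deg f})(1-t^{\deg g}) = (1-t^3)(1-t^9)$, which forces $\{\deg f,\deg g\} = \{3,9\}$. (Equivalently, $\dim A^1 = 2$ and $\dim A^2 = 3$ show the smaller generator degree is $\ge 3$; the cubic relation detected by $\dim A^3 = 3 < 4$ pins it to $3$; and the socle degree $10 = a+b-2$ then forces the other degree to be $9$.) This gives $A^*(M_{3,-1}) = \BQ[c_0(2),c_2(0)]/(f_3,f_9)$ with $\deg f_3 = 3$ and $\deg f_9 = 9$, as claimed.

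The main obstacle is Step 1: producing enough tautological relations to reduce all the $c_k(j)$ to the two Picard classes. Steps 2 and 3 are essentially formal once the Betti numbers are granted and one invokes the codimension-two Gorenstein-equals-complete-intersection principle --- the point being that in two variables the ring structure is rigidly determined by the grading, the Gorenstein property, and a couple of dimensions.
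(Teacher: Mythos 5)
Your Steps 2 and 3 are formally sound as conditional statements: Poincar\'e duality does make $A^*(M)$ a graded Artinian Gorenstein algebra, a graded Gorenstein ideal of codimension two in $\BQ[x,y]$ is a complete intersection, and the Hilbert-series comparison would then force the generator degrees $\{3,9\}$. But the proposal has a genuine gap at its two inputs, and these are exactly where the content of the proposition lies. First, Step 1 (generation by $c_0(2)$, $c_2(0)$) is never carried out: you defer to ``the $d=3$ instance'' of the relation-producing machinery behind Theorem \ref{thm1}(a), but that machinery is developed in the paper only for $d\geq 4$ (its Step 3 explicitly needs $d\geq 4$ to keep the tuples $(0,1,\ldots,1,0,0)$ and $(0,2,0,0)$ distinct), and invoking Theorem \ref{thm1}(a) itself for $d=3$ would be circular, since this proposition \emph{is} how the paper proves Theorem \ref{thm1} in the case $d=3$. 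Second, the Betti-number input of Step 2 is miscited: Yuan's result (Theorem \ref{lem3.1}) covers even degrees $\leq 2d-4$ only, which for $d=3$ means just $b_0$ and $b_2$ --- nowhere near the full Hilbert function $(1,2,3,\ldots,3,2,1)$ that your computation $(1-t)^2\sum_k \dim A^k(M)\,t^k=(1-t^3)(1-t^9)$ requires.

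The paper's proof is instead completely explicit and bypasses both issues: by Le Potier, $M_{3,-1}$ is isomorphic to the universal cubic $\CC\subset \BP^2\times \BP^9$, which is the projectivization $\BP(\CE)$ of a rank-$9$ bundle $\CE$ on $\BP^2$ with $c_1(\CE)=-3H_1$ and $c_2(\CE)=9H_1^2$; the projective bundle formula then gives
\[
A^*(M)=\BQ[H_1,\xi]/\bigl(H_1^3,\ \xi^9-3H_1\xi^8+9H_1^2\xi^7\bigr)
\]
in one stroke, and Proposition \ref{prop1.3}(c) (together with the explicit expressions for $c_0(2)$, $c_2(0)$ in terms of $H_1,H_2$) converts the variables by an invertible linear substitution, so the ideal is still generated in degrees $3$ and $9$. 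Note that this single geometric input simultaneously supplies your Step 1 (generation), your Step 2 (the Hilbert function), and the relations themselves with explicit equations; the only plausible source for your missing inputs --- the ``known description of $M_{3,\pm1}$'' you mention parenthetically --- is precisely this one, and once it is in hand the Gorenstein/complete-intersection detour is unnecessary. Your Step 3 remains a nice observation, showing the degrees $(3,9)$ are forced by the Betti numbers and ring-theoretic generalities alone, but as written the proposal does not amount to a proof.
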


In particular $A^*(M)$ is generated by the two classes in $A^1(M)$, and the lowest degree nontrivial relation occurs in degree $3$. This matches with the prediction of Theorem \ref{thm1} (and Conjecture \ref{conj3.3} below).

\smallskip

 Recall that by \cite{LeP} the moduli space $M_{3,2}$ (hence also $M$) is  isomorphic to the universal cubic
 \[
\mathcal{C} \subset \mathbb{P}^2\times \BP H^0(\BP^2, \CO_{\BP^2}(3))=\mathbb{P}^2\times {\BP}^9
\]
where we use $H_1$ and $H_2$ to denote the hyperplane classes in the first and the second factors respectively. Projecting over the second factor, we see that $M$ is the projective bundle $\BP(\CE)$ associated with a rank $9$ vector bundle $\CE$. Here the vector bundle $\CE$ is characterized by
\[
\CE|_{x} = H^0(\BP^2, \CI_x\otimes \CO_{\BP^2}(3))
\]
with $\CI_x \subset \CO_{\BP^2}$ the ideal sheaf of the point $x$. In particular, we have the short exact sequence
\[
0 \to \CE \to \CO_{\BP^2}^{\oplus 10} \to \CO_{\BP^2}(3) \to 0,
\]
which yields 
\[
c_1(\CE) = -3H_1, \quad c_2(\CE) = 9H_1^2.
\]
Denote by $\xi\in A^1(M)$ the relative hyperplane class over $\BP^2$. By the projective bundle formula, we obtain that \[
A^*(M) = A^*(\CC)=\BQ[H_1, \xi]/(H_1^3, \xi^9 - 3H_1\xi^8+9H_1^2\xi^7).
\]
This already implies Proposition \ref{main} in view of Proposition \ref{prop1.3} (c).

For completion, we give an explicit expression of the ideal $I$ in terms of the tautological generators $c_0(2), c_2(0)$. Consider the incidence subvarieties
\begin{align*}
    W =& \left\{(x,x,y) \in \BP^2 \times \BP^2 \times \BP^9:~ (x,y) \in \CC\right\} \subset \BP^2 \times M,\\
    V = & \left\{(x,y,z) \in \BP^2 \times \BP^2 \times \BP^9:~ (x,z) \in \CC,~ (y,z)\in \CC \right\} \subset \BP^2 \times M.
\end{align*}
Clearly $W \subset V$. Recall that $H$ is the class of a line on the first factor $\BP^2$. The Chern character of the structure sheaf $\CO_V$ can be obtained by applying the Grothendieck-Riemann-Roch theorem to the closed embedding $i_V: V\hookrightarrow \BP^2\times M$:
\[
\mathrm{ch}(\CO_V) = i_{V*}\mathrm{td}({T_{i_V}})= (3H+H_2)-\frac{1}{2}(3H+H_2)^2 +\frac{1}{6}(3H+H_2)^3- \cdots.
\]
Similarly we also have for $i_W: W\hookrightarrow \BP^2 \times M$:
\[
\mathrm{ch}(\CO_W) = (H+HH_1+H_1^2) -\frac{3}{2}(H^2H_1+HH_1^2) + \cdots.
\]
The ideal sheaf associated with $W \subset V$ yields a universal family over $\BP^2 \times M$:
\[
0 \to \BF \to \CO_V \to \CO_W \to 0
\]
from which we may calculate the Chern characters : 
\begin{align*}
    \mathrm{ch}_1(\BF)&=3H+H_2 ,\\
    \mathrm{ch}_2(\BF)&= -\frac{11}{2}H^2 - (H_1+3H_2)H - (H_1^2+\frac{1}{2}H_2^2),\\
    \mathrm{ch}_3(\BF)&=  \frac{1}{6}(3H+H_2)^3 +\frac{3}{2}(H^2H_1+HH_1^2), \cdots.
\end{align*} 
We obtain that the class of the normalization is
\[
\alpha = \frac{11}{6}H + \frac{1}{3}H_1 + \frac{7}{18}H_2, 
\]
and the tautological classes are
\begin{equation}\label{last}
c_0(2) = H_2,\quad c_2(0) = - \frac{1}{3}H_1 +\frac{49}{72}H_2,  \quad c_1(2) = -H_1^2 +\frac{1}{3}H_1 H_2 - \frac{1}{9} H_2^2,\, \cdots .
\end{equation}
Finally we note that $\xi = H_2$. Therefore the ideal
\[
I = (H_1^3, \xi^9-3H_1 \xi^8 +9H_1^2 \xi^7)
\]
can be expressed in terms of $c_0(2),c_2(0)$ via (\ref{last}).

\begin{rmk}
For the case $d=4$, the Chow ring of $M_{4,1}$ was calculated by Chung-Moon \cite{ChM}. In particular, they showed that $A^*(M_{4,1})$ is generated as a $\BQ$-algebra by 2 generators in $A^1(M_{4,1})$ and 3 generators in $A^2(M_{4,1})$, which matches with Theorem \ref{thm1}.  
\end{rmk}

\section{Tautological relations}\label{Sec2}

\subsection{Overview}
The goal of this section is to prove Theorem \ref{thm1} (a). As the structure for $A^*(M_{3,\chi})$ is clear by Section \ref{deg3}, from now on we focus on the case $d \geq 4$. 

In view of Theorem \ref{thm1.5}, we show that every class 
\[
c_k(j)\in A^{k+j-1}(M_{d,\chi}), \quad k+j-1 \geq d-1
\]
can be expressed in terms of (\ref{taut}).

For convenience, we fix a universal family $\BF$, and define the classes
\[
e_k(j):=(-1)^{k+1}\int_{H^j} \mathrm{ch}_{k+1}(\BF) \in A^{k+j-1}(M_{d,\chi})
\]
which are dependent on $\BF$.\footnote{The factor $(-1)^{k+1}$ comes from taking dual, as we shall see below.} We introduce a total ordering $\prec$ on the double indices $(k,j)$: we say that $(k,j) \prec (k',j')$ if and only if $k+j-1<k'+j'-1$, or $k+j-1=k'+j'-1$ and $k<k'$. Thus we can talk about the \textit{leading term} of a homogeneous polynomial in the classes $e_k(j)$, respectively, $c_k(j)$, in $A^*(M_{d,\chi})$.

\medskip

Consider the $3d-6$ classes
\begin{equation}\label{eclass}
   e_{k}(0), e_{k-1}(1), e_{k-2}(2) \in A^{k-1}(M_{d,\chi}),~~~ k\in \{2,3,\dots, d-1 \}.
    \end{equation}

\begin{lem}
\label{lem2.1}
    We have 
    \[
    e_0(1)= -d\in A^0(M_{d,\chi}), \quad e_1(0)=\chi-\frac{3}{2}d\in A^0(M_{d,\chi}).
    \]
\end{lem}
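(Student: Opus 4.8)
The plan is to observe that both $e_0(1)$ and $e_1(0)$ lie in $A^0(M_{d,\chi}) = \BQ$, so each is a single rational number that can be computed on one fiber. Concretely, $\pi_{M*}$ lowers cohomological degree by $2$, and for both classes the relevant product lies in $A^2(\BP^2 \times M_{d,\chi})$; hence the push-forward equals the integral over a fiber $\BP^2 \times [\CF]$ of the restriction of that product. Since the Chern character commutes with restriction and $\BF|_{\BP^2 \times [\CF]} = \CF$, it suffices to evaluate the corresponding integrals over a single copy of $\BP^2$ for any chosen stable sheaf $\CF$ with $[\mathrm{supp}(\CF)] = dH$ and $\chi(\CF) = \chi$.

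For $e_0(1)$, unwinding the definition gives $e_0(1) = -\int_H \mathrm{ch}_1(\BF) = -\pi_{M*}\!\left(\pi_P^* H \cdot \mathrm{ch}_1(\BF)\right)$. I would substitute $\mathrm{ch}_1(\BF) = d\,\pi_P^* H + \pi_M^* D_0$ from Lemma \ref{lem1.1} (as recorded in the proof of Proposition \ref{prop1.2}), expand, and push forward: the term $d\,\pi_P^* H^2$ contributes $d$ since $\pi_{M*}\pi_P^* H^2 = 1$, while $\pi_P^* H \cdot \pi_M^* D_0$ pushes to $D_0 \cdot \pi_{M*}\pi_P^* H = 0$ by the projection formula, the class $\pi_P^* H$ having too low relative degree to push forward nontrivially. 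This yields $e_0(1) = -d$.

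For $e_1(0) = \int_{\mathbf{1}_{\BP^2}}\mathrm{ch}_2(\BF) = \pi_{M*}\mathrm{ch}_2(\BF)$, the fiberwise reduction identifies this number with $\int_{\BP^2}\mathrm{ch}_2(\CF)$. I would then apply Riemann-Roch on $\BP^2$: $\chi(\CF) = \int_{\BP^2}\mathrm{ch}(\CF)\cdot \mathrm{td}(\BP^2)$ with $\mathrm{td}(\BP^2) = 1 + \tfrac{3}{2}H + H^2$. Since $\CF$ is $1$-dimensional, $\mathrm{ch}_0(\CF) = 0$ and $\mathrm{ch}_1(\CF) = dH$; extracting the degree-$2$ part gives $\chi = \int_{\BP^2}\mathrm{ch}_2(\CF) + \tfrac{3}{2}d$, that is $e_1(0) = \chi - \tfrac{3}{2}d$. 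Equivalently, one can run Grothendieck-Riemann-Roch for $\pi_M$ directly, using $T_{\pi_M} = \pi_P^* T_{\BP^2}$ and $\mathrm{ch}_0(R\pi_{M*}\BF) = \chi$; the degree-$0$ part reproduces the same identity.

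The computation is entirely routine, so there is no serious obstacle; the only points requiring care are the sign convention $(-1)^{k+1}$ built into the definition of $e_k(j)$ and the normalization of $\pi_{M*}$ (which here contributes no sign, as $\mathrm{ch}_2(\BF)$ restricts to $\mathrm{ch}_2(\CF)$ without a shift). One should also confirm that the vanishing $\pi_{M*}\pi_P^* H = 0$ and the evaluation $\pi_{M*}\pi_P^* H^2 = 1$ are applied in the correct degrees.
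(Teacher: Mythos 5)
Your proof is correct and takes essentially the same route as the paper: the identity $e_0(1)=-d$ is read off from Lemma \ref{lem1.1} (the explicit form $\mathrm{ch}_1(\BF)=d\,\pi_P^*H+\pi_M^*D_0$ plus the projection formula), and $e_1(0)=\chi-\tfrac{3}{2}d$ follows from the fiberwise Hirzebruch--Riemann--Roch computation $\chi(\CF)=\deg\big(\mathrm{ch}_2(\CF)+\tfrac{3}{2}\,\mathrm{ch}_1(\CF)\cdot H\big)$. Your version merely spells out the degree bookkeeping and sign convention that the paper leaves implicit, which is harmless.
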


\begin{proof}
    The first claim is established in Lemma \ref{lem1.1}. For the second, consider a sheaf $[\CF]\in M_{d,\chi}$; the Hirzebruch-Riemann-Roch formula gives
    \[
    \chi(\CF)=\deg_2(\mathrm{ch}(\CF)\cdot \mathrm{td}(\BP^2))=\deg\Big(\frac{3}{2}  \mathrm{ch}_1(\CF)\cdot H +\mathrm{ch}_2(\CF)\Big),
    \]

Hence $\mathrm{ch}_2(\CF)=(\chi-\frac{3}{2}d)H^2\in A^2(M_{d,\chi})$, and the claim follows.
\end{proof}

The next proposition follows from a direct calculation via the expansion 
\[\mathrm{ch}^\alpha_{k+1}(\mathbb{F})=\mathrm{ch}_{k+1}(\BF)+\alpha \cdot  \mathrm{ch}_k(\BF)+\frac{1}{2}\alpha^2 \mathrm{ch}_{k-1}(\BF)+\cdots
    \] 
as for the proof of Theorem \ref{thm1.5}.

\begin{prop}
\label{prop2.2}
    The $3d-7$ tautological classes in (\ref{taut}) and the $3d-6$ classes in (\ref{eclass}) generate each other. Moreover, every class $c_k(j)\in A^{k+j-1}(M_{d,\chi})$ with $k+j-1\geq 2$ can be expressed as a polynomial in (\ref{taut}) with leading term $e_k(j)$, and vice versa.
\end{prop}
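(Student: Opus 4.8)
The plan is to compare the normalized classes $c_k(j)$ with the unnormalized classes $e_k(j)$ directly through the defining expansion, and to show that the change of variables between the two families is triangular for the ordering $\prec$. Writing $\alpha = \lambda_1 \pi_P^* H + \pi_M^*\beta$ with $\beta = \lambda_2 D_0 + \lambda_3 D_1 \in A^1(M_{d,\chi})$ as in the proof of Proposition \ref{prop1.2}, I would substitute $\mathrm{ch}^\alpha_{k+1}(\BF) = \sum_{i\ge 0}\frac{1}{i!}\alpha^i\,\mathrm{ch}_{k+1-i}(\BF)$ into the definition of $c_k(j)=\int_{H^j}\mathrm{ch}^\alpha_{k+1}(\BF)$, and expand each $\alpha^i$ by the binomial theorem into monomials $(\pi_P^*H)^a(\pi_M^*\beta)^b$ with $a+b=i$. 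Pulling the $\pi_M^*$-factors out of the fibre integral by the projection formula and using $H^3=0$ on $\BP^2$, each surviving summand becomes $\beta^b$ times a scalar multiple of $e_{k-i}(j+a)$, with only the terms satisfying $j+a\le 2$ contributing.

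First I would isolate the $i=0$ summand, which equals $(-1)^{k+1}e_k(j)$ and is the leading term. Then I would check that every other summand is strictly $\prec$-smaller: it carries a factor $e_{k-i}(j+a)$ with $i\ge 1$, and since $a+b=i$ one computes that the total degree stays equal to $k+j-1$ while the $e$-index drops to $k-i<k$; when $b\ge 1$ the summand is moreover a product in which $\beta^b\in A^{\le b}$ is a polynomial in $c_0(2),c_2(0)$, these two classes spanning $\mathrm{Pic}(M_{d,\chi})\otimes\BQ$ by Proposition \ref{prop1.3}(c). This produces the triangular relation
\[
c_k(j) = (-1)^{k+1} e_k(j) + \bigl(\text{a polynomial in the } e_{k'}(j') \text{ with } (k',j') \prec (k,j)\bigr),
\]
in which every lower term is assembled from $e$-classes of smaller $\prec$-index together with the two degree-one tautological classes, and (since $j+a\le 2$) the indices arising stay within the same total-degree range. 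Possible scalar contributions coming from $e_0(1)$ and $e_1(0)$ are harmless and are pinned down by Lemma \ref{lem2.1}.

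From here the two assertions follow formally. As the diagonal coefficient $(-1)^{k+1}$ is a unit, I would invert the relation by induction on $\prec$ to express each $e_k(j)$ as a polynomial in the $c$-classes with leading term $(-1)^{k+1}c_k(j)$, and conversely; restricting to total degrees $1\le k+j-1\le d-2$ this shows that (\ref{taut}) and (\ref{eclass}) generate each other with the stated leading terms. The only point I expect to require genuine care is the degree-one discrepancy between the two sets: (\ref{eclass}) carries the three classes $e_2(0),e_1(1),e_0(2)$ spanning the $2$-dimensional space $A^1(M_{d,\chi})$, while (\ref{taut}) carries only $c_0(2),c_2(0)$. This is reconciled by the normalizations $c_1(0)=c_1(1)=0$ of Proposition \ref{prop1.3}(b): expanding $c_1(1)=0$ (resp.\ $c_1(0)=0$) through the triangular relation above yields a linear dependence that expresses the redundant degree-one $e$-class in terms of $c_0(2),c_2(0)$, so that in degree one both sets span the same plane and the induction acquires a valid base case. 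The higher-degree steps are then the routine index-and-degree bookkeeping already indicated above.
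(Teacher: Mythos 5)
Your proposal is correct and follows essentially the same route as the paper, whose entire proof is the one-line remark that the statement "follows from a direct calculation via the expansion $\mathrm{ch}^\alpha_{k+1}(\BF)=\mathrm{ch}_{k+1}(\BF)+\alpha\cdot\mathrm{ch}_k(\BF)+\frac{1}{2}\alpha^2\mathrm{ch}_{k-1}(\BF)+\cdots$"; you have simply spelled out the binomial/projection-formula bookkeeping, the $\prec$-triangularity, and the inversion by induction. One cosmetic point: $c_1(0)=0$ is a condition in $A^0(M_{d,\chi})$ (it fixes the scalar $\lambda_1$), so the degree-one reconciliation really rests on $c_1(1)=0$ together with Proposition \ref{prop1.3}(c), which you also invoke, so nothing is lost.
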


For any $k,j$ with $k+j-1\geq d-1$, we will produce a relation in $A^*(M_{d,\chi})$ with leading term $e_k(j)$. We say that such a relation \textit{kills} $e_k(j)$. Theorem \ref{thm1} (a) follows from the existence of the relations that kill $e_j(k)$ as above.
    
\subsection{Producing relations}

By Proposition \ref{prop1.4}, we only need to consider the case $0<\chi<d$ so that any $\CF \in M_{d,\chi}$  satisfies $0<\mu(\CF)<1$.

We consider the triple product $Y:=\BP^2 \times M_{d,\chi} \times \check{\mathbb{P}}^2$ with $\check{\mathbb{P}}^2$ the dual projective plane. Let $\pi_R : Y\to \check{\mathbb{P}}^2$ be the projection to the third factor. We write $p=\pi_P\times \pi_M$, $q=\pi_P\times \pi_R$, and $r=\pi_M \times \pi_R$, where by abuse of notation we denote by $\pi_P$ and $\pi_M$ also the projections from $Y$ to $\BP^2$ and $M_{d,\chi}$ respectively:
 
 \[\begin{tikzcd}
	& Y \\
	{\BP^2\times M_{d,\chi}} & {M_{d,\chi} \times \check{\mathbb{P}}^2} & {\BP^2\times \check{\mathbb{P}}^2}
	\arrow["p"', from=1-2, to=2-1]
	\arrow["r"', from=1-2, to=2-2]
	\arrow["q", from=1-2, to=2-3].
\end{tikzcd}\] 
Let  $Z\subset \BP^2\times \check{\mathbb{P}}^2$ be the incidence subscheme, and let $\CO_Z$ be the structure sheaf of $Z$ viewed as a coherent sheaf on $\BP^2 \times \check{\BP}^2$. Let $\beta \in A^1(\check{\BP}^2)$ be the class of a line in $\check{\BP}^2$.

 \begin{lem}
 \label{lem2.4}
 We have
 \[
 \mathrm{ch}(q^*\mathcal{O}_Z)=1-\mathrm{exp}(-(\pi_P^* H + \pi_R^* \beta)).
 \]
 \end{lem}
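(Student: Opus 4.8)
The plan is to compute $\mathrm{ch}(\CO_Z)$ directly on the base $\BP^2 \times \check{\BP}^2$ and then transport the answer to $Y$ via the flat projection $q$. First I would recall that the incidence subscheme $Z = \{(x,\ell) : x \in \ell\}$ is the classical point-line flag locus, cut out by the single bilinear incidence equation; it is therefore a smooth effective divisor whose class is
\[
[Z] = h + \check{h} \in A^1(\BP^2 \times \check{\BP}^2),
\]
where $h$ and $\check{h}$ denote the two hyperplane classes. Being an effective Cartier divisor, $Z$ admits the one-term line-bundle (Koszul) resolution
\[
0 \to \CO(-Z) \to \CO_{\BP^2 \times \check{\BP}^2} \to \CO_Z \to 0,
\]
with $\CO(-Z) = \CO(-h-\check{h})$. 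Taking Chern characters and using their additivity on short exact sequences gives
\[
\mathrm{ch}(\CO_Z) = 1 - \mathrm{ch}\big(\CO(-Z)\big) = 1 - \mathrm{exp}\big(-(h+\check{h})\big).
\]

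Next I would pull back along $q : Y \to \BP^2 \times \check{\BP}^2$. Since $q$ is the projection forgetting the factor $M_{d,\chi}$, it is flat; hence $q^*$ is exact, preserves the resolution above, and commutes with the Chern character, so that $\mathrm{ch}(q^*\CO_Z) = q^*\mathrm{ch}(\CO_Z)$. Under $q = \pi_P \times \pi_R$ the two hyperplane classes pull back to $q^* h = \pi_P^* H$ and $q^* \check{h} = \pi_R^* \beta$. Substituting yields
\[
\mathrm{ch}(q^*\CO_Z) = 1 - \mathrm{exp}\big(-(\pi_P^* H + \pi_R^* \beta)\big),
\]
which is exactly the claimed formula.

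There is no genuinely hard step here; the computation is routine once two points are in place, and these are the only things I would check with care. The first is that $Z$ is a \emph{smooth}, hence Cartier, divisor of class $h+\check{h}$, so that the single-term line-bundle resolution suffices and no higher Koszul terms contribute to $\mathrm{ch}(\CO_Z)$. The second is the flatness of $q$, which guarantees that $q^*\CO_Z$ carries the expected Chern character with no correction from higher derived pullbacks. Both are immediate: the incidence locus in $\BP^2 \times \check{\BP}^2$ is a smooth partial flag variety, and $q$ is a product projection.
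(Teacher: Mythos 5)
Your proof is correct and takes essentially the same route as the paper: both use the ideal-sheaf sequence $0 \to \CO(-Z) \to \CO \to \CO_Z \to 0$ together with the fact that $[Z]$ is the $(1,1)$-class, then apply additivity of $\mathrm{ch}$ and pull back along the (flat) projection $q$. Your explicit justification of the two routine points (that $Z$ is an effective Cartier divisor of class $h+\check{h}$, and that flatness of $q$ lets $\mathrm{ch}$ commute with pullback) simply spells out what the paper leaves implicit.
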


\begin{proof}
It follows from the ideal sheaf sequence
 \[
 0 \to \CO_{\BP^2\times \check{\BP}^2}(-Z) \to \CO_{\BP^2 \times \check{\BP}^2 }\to \mathcal{O}_Z \to 0,
 \]
 and the fact that the divisor class $[Z]\in A^1(\BP^2\times \check{\BP^2})$ is $\pi_P^* H +\pi_R^* \beta$.
 \end{proof}

We consider the complex
 \[
 \mathcal{H}(n):= R \mathcal{H}\kern -1.2 pt \mathit{om}(p^*\BF, q^*\mathcal{O}_Z\otimes \pi_P^*\mathcal{O}_{\BP^2}(-n)) \in D^b\mathrm{Coh}(Y)
 \]
 for $n\in \{1,2,3\}$. Since $r: Y \to M_{d,\chi} \times \check{\BP}^2$ is a trivial $\BP^2$-bundle, the derived pushforward of $\CH(n)$:
 \[
 Rr_* \mathcal{H}(n) \in D^b\mathrm{Coh}(M_{d,\chi} \times \check{\BP}^2)
 \]
 admits a three-term resolution $K^0\to K^1\to K^2$ by vector bundles.
 
 \begin{lem}
 \label{K^1}
For each $n\in \{1,2,3\}$, we can choose $K^i$ such that $K^0=K^2=0$ and $K^1$ is free of rank $d$.
 \end{lem}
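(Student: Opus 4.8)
The plan is to prove the statement via the theorem on cohomology and base change applied to the trivial $\BP^2$-bundle $r$. It suffices to show that $Rr_*\mathcal{H}(n)$ is quasi-isomorphic to a single locally free sheaf of rank $d$ placed in cohomological degree $1$; one then takes that sheaf as $K^1$ and sets $K^0=K^2=0$. Since $p^*\BF$ and $q^*\CO_Z\otimes\pi_P^*\CO_{\BP^2}(-n)$ are flat over $M_{d,\chi}\times\check{\mathbb{P}}^2$, base change identifies the derived fiber of $Rr_*\mathcal{H}(n)$ over a point $([\CF],L)$ with the complex $R\Hom_{\BP^2}(\CF,\CO_L(-n))$, where $L\subset\BP^2$ is the line corresponding to the point of $\check{\mathbb{P}}^2$ and $\CO_L(-n)$ is the degree $-n$ line bundle on $L\cong\BP^1$. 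Thus the statement reduces to the fiberwise claim that $\Ext^i_{\BP^2}(\CF,\CO_L(-n))$ vanishes for $i\neq1$ and has dimension $d$ for $i=1$, uniformly in $[\CF]\in M_{d,\chi}$ and $L\in\check{\mathbb{P}}^2$.

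First I would establish the two vanishings using the stability of $\CF$ together with the normalization $0<\mu(\CF)<1$, which we may assume by Proposition \ref{prop1.4}. For $\Hom_{\BP^2}(\CF,\CO_L(-n))$, any nonzero map has image both a subsheaf of the pure $1$-dimensional sheaf $\CO_L(-n)$ and a proper quotient of the stable sheaf $\CF$; purity forbids a $0$-dimensional image, while comparing slopes gives $\mu(\mathrm{im})>\mu(\CF)>0$ from the quotient side but $\mu(\mathrm{im})\leq\mu(\CO_L(-n))=1-n\leq0$ from the subsheaf side, a contradiction. For $\Ext^2$, Serre duality yields $\Ext^2_{\BP^2}(\CF,\CO_L(-n))\cong\Hom_{\BP^2}(\CO_L(-n),\CF(-3))^\vee$, and the same bookkeeping applies: a nonzero map would have image of slope $\geq\mu(\CO_L(-n))=1-n\geq-2$ as a quotient of the stable sheaf $\CO_L(-n)$, yet of slope $<\mu(\CF(-3))=\mu(\CF)-3<-2$ as a proper subsheaf of the stable sheaf $\CF(-3)$, impossible for $n\in\{1,2,3\}$. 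Here $\CO_L(-n)$ is stable because $L$ is integral and $\CO_L(-n)$ is a line bundle on it, so both arguments rest only on slope inequalities and are insensitive to whether $L$ meets or is contained in $\mathrm{supp}(\CF)$.

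With $\Ext^0=\Ext^2=0$ in hand, the rank of $\Ext^1$ is computed by the Euler pairing. A Riemann--Roch computation on $\BP^2$ using $\mathrm{ch}(\CF)=dH+(\chi-\tfrac32 d)H^2$ and $\mathrm{ch}(\CO_L(-n))=H+(-\tfrac12-n)H^2$ gives $\chi(\CF,\CO_L(-n))=-d$, independent of $n$ and of the point $([\CF],L)$. Hence $\dim\Ext^1_{\BP^2}(\CF,\CO_L(-n))=d$ is constant. By cohomology and base change, the constancy of the fiberwise dimensions together with vanishing outside degree $1$ forces $R^ir_*\mathcal{H}(n)=0$ for $i\neq1$ and $R^1r_*\mathcal{H}(n)$ to be locally free of rank $d$, so that $Rr_*\mathcal{H}(n)\simeq R^1r_*\mathcal{H}(n)[-1]$. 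Setting $K^1:=R^1r_*\mathcal{H}(n)$ and $K^0=K^2=0$ completes the argument.

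The main obstacle I anticipate is not any single computation but guaranteeing that the two vanishings hold at every point of $M_{d,\chi}\times\check{\mathbb{P}}^2$ simultaneously---in particular for sheaves $\CF$ with non-reduced or reducible support and for lines $L$ contained in that support---since only uniform vanishing lets one invoke base change to obtain local freeness of constant rank. The stability-and-purity argument above is robust precisely because it depends only on slope inequalities and on $\CO_L(-n)$ being stable, so these potentially degenerate configurations cause no trouble; the remaining ingredients, namely the Riemann--Roch bookkeeping and the application of cohomology and base change to the perfect complex $Rr_*\mathcal{H}(n)$, are routine.
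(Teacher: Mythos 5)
Your proof is correct and takes essentially the same route as the paper's: fiberwise vanishing of $\Hom(\CF,\CO_L(-n))$ and, via Serre duality, of $\Ext^2(\CF,\CO_L(-n))$ from slope stability under the normalization $0<\mu(\CF)<1$, the rank $d$ from the Riemann--Roch/Euler-pairing computation, and cohomology and base change to represent $Rr_*\mathcal{H}(n)$ by a single rank-$d$ bundle in degree $1$. The only difference is that you spell out the slope-comparison and purity details (and the stability of $\CO_L(-n)$) that the paper leaves implicit when it says "by stability."
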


\begin{proof}
 For a point 
 \[
 P=([\mathcal{F}],p)\in M_{d,\chi}\times \check{\mathbb{P}}^2,
 \]
 we denote by $H_P \subset \BP^2$ the line correspoinding to $p \in \check{\BP}^2$. Then over this point $P$, cohomology of the complex of 
 \begin{equation}\label{66}
 K^0(P) \to K^1(P)\to K^2(P)
 \end{equation}
 computes the extension groups
 \[
 \mathrm{Ext}^i(\mathcal{F}, \mathcal{O}_{H_P}(-n)) \quad i=1,2,3,
 \]
 on $\BP^2$; see \cite{Beau}. Note $\mu(\mathcal{O}_{H_P}(-n))=1-n$ so that $\mu(\mathcal{O}_{H_P}(-n))<\mu(\CF)$ for $n\in \{1,2,3\}$. Therefore by stability we have $\mathrm{Hom}(\CF, \mathcal{O}_{H_P}(-n))=0$. On the other hand, Serre duality gives
 \[
 \mathrm{Ext}^2(\mathcal{F}, \CO_{H_P}(-n))\simeq \mathrm{Hom}(\CO_{H_P}(-n), \CF\otimes \omega_X)^\vee=\mathrm{Hom}(\CO_{H_P}(-n), \CF(-3))^\vee.
 \]
 Note that $\mu(\CF(-3))< -2\leq 1-n$ for $n\in \{1,2,3\}$, and hence $\mathrm{Hom}(\CO_{H_P}(-n), \CF(-3))=0$, again from stability. It follows that the zeroth and the second cohomology of (\ref{66}) vanish for every $P$. Hence $Rr_*\mathcal{H}(n)$ can be represented by a single vector bundle $K^1$ concentrated in degree $1$ whose rank is determined by the Hirzebruch-Riemann-Roch calculation:
 \[
 \chi(\CH\kern -1.2pt \mathit{om}(\CF, \CO_{H_P}(-n)))=\deg_2(\mathrm{ch}(\CF^\vee)\cdot \mathrm{ch}(\CO_{H_P}(-n)) \cdot \mathrm{td}(\BP^2))=-d. \qedhere
 \]
\end{proof}

The next corollary follows immediately.

\begin{cor}
\label{cor2.3}
For $\ell \geq d+1$ and $n\in \{1,2,3\}$, we have
\[
 c_\ell(-Rr_*\mathcal{H}(n))=0 \in A^{2\ell}(M_{d,\chi} \times \check{\BP}^2).
\]
\end{cor}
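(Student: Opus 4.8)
The plan is to deduce Corollary \ref{cor2.3} directly from Lemma \ref{K^1}. By that lemma, for each $n \in \{1,2,3\}$ the complex $Rr_*\CH(n)$ is represented, up to a shift by $1$, by a single vector bundle $K^1$ of rank $d$ on $M_{d,\chi} \times \check{\BP}^2$. In the Grothendieck group this means
\[
-Rr_*\CH(n) = K^1 \in K_0(M_{d,\chi} \times \check{\BP}^2),
\]
since the odd shift contributes the sign $(-1)^1 = -1$, cancelling the explicit minus sign. Therefore $c_\ell(-Rr_*\CH(n)) = c_\ell(K^1)$.

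First I would recall the elementary fact that the Chern classes of a vector bundle vanish above its rank: for a rank-$d$ bundle $K^1$ we have $c_\ell(K^1) = 0$ for all $\ell > d$, hence in particular for all $\ell \geq d+1$. Combining this with the identification above gives $c_\ell(-Rr_*\CH(n)) = 0$ for $\ell \geq d+1$, which is precisely the assertion. The only point requiring a word of care is the degree bookkeeping: a class in $A^{\ell}$ of a vector bundle is represented geometrically by a codimension-$\ell$ cycle, so here $c_\ell(K^1) \in A^{\ell}(M_{d,\chi} \times \check{\BP}^2)$; the superscript $2\ell$ in the statement reflects the cohomological grading $A^* = H^{2*}$ fixed in the introduction, so $A^{2\ell}$ in the cohomological convention matches $A^\ell$ in the Chow convention, and no discrepancy arises.

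I do not anticipate any serious obstacle: the corollary is a formal consequence of Lemma \ref{K^1} together with the standard vanishing of high Chern classes of a bundle of bounded rank. The phrase "follows immediately" in the text is accurate. The one subtlety worth making explicit is that the sign in $-Rr_*\CH(n)$ is exactly compensated by the cohomological degree-$1$ shift, so that the total Chern class in question is genuinely that of an honest rank-$d$ vector bundle rather than of a virtual class with a more complicated Chern polynomial; once this is observed, the conclusion is immediate.
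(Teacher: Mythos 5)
Your proof is correct and is essentially the paper's own argument: the corollary is derived immediately from Lemma \ref{K^1} by identifying $-Rr_*\mathcal{H}(n)$ with the rank-$d$ bundle $K^1$ in the Grothendieck group (the explicit minus sign cancelling the $(-1)$ coming from the complex being concentrated in degree $1$) and then invoking the vanishing of Chern classes of a vector bundle above its rank. Your side remark on the grading also correctly resolves the superscript $2\ell$ in the statement as the cohomological counterpart of Chow degree $\ell$.
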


Corollary \ref{cor2.3} is the main source producing relations, which we explain as follows.

\medskip
 
By the Grothendieck-Riemann-Roch theorem, we have
\begin{align*}
\mathrm{ch}(Rr_*\mathcal{H}(n))&=r_*(\mathrm{ch}(\mathcal{H}(n))\cdot \mathrm{td}(\BP^2))\\
&=r_*(\mathrm{ch}(p^*\mathbb{F}^\vee)\cdot \mathrm{ch}(q^*\mathcal{O}_Z\otimes \pi_P^* \mathcal{O}_{\BP^2}(-n))\cdot \mathrm{td}(\BP^2)).
\end{align*}
If we expand the right-hand side, Corollary \ref{cor2.3} gives relations among the pullbacks of $e_k(j)$ and $\beta^i$ in $A^*(M_{d,\chi} \times \check{\BP}^2)$. This will be the main result we use in Section \ref{Sec2.3}, so we state it as a proposition below.

\begin{prop}
For every $\ell \geq d+1$, the following identity holds:
 \begin{equation}
 \label{relation}
 \sum_\mathbf{m} \prod_{s=1}^\ell \frac{((s-1)!)^{m_s}}{(m_s)!}\left(\pi_M^* A_s-\sum_{\substack{i+j=s\\ 0\leq i\leq 2}} \frac{\pi_R^*\beta^i}{i!}(-1)^i \pi_M^* B_j\right)^{m_s}=0 \in A^*(M_{d,\chi} \times \check{\mathbb{P}}^2).
 \end{equation}
 Here, the first sum is over all $\ell$-tuple of non-negative integers $\mathbf{m}=(m_1,m_2,\ldots, m_\ell)$ such that $m_1+2m_2+\cdots +\ell m_\ell=\ell$, and $A_s, B_s$ are given by
\begin{align*}
    A_s &:=e_{s+1}(0)+(\frac{3}{2}-n)e_{s}(1)+(\frac{1}{2}n^2-\frac{3}{2}n+1)e_{s-1}(2)\in A^s({M_{d,\chi}}),\\
    B_s &:=e_{s+1}(0)+(\frac{1}{2}-n)e_{s}(1)+(\frac{1}{2}n^2-\frac{1}{2}n)e_{s-1}(2)\in A^s(M_{d,\chi}).
\end{align*}
 
\end{prop}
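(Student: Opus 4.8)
The plan is to convert the Chern-class vanishing of Corollary \ref{cor2.3} into the stated polynomial identity by passing through the Chern character. I would start from the universal identity
\[
c(E) = \exp\left( \sum_{m\geq 1} (-1)^{m-1}(m-1)!\, \mathrm{ch}_m(E)\right),
\]
which follows from Newton's identities relating the power sums $p_m = m!\,\mathrm{ch}_m$ to the elementary symmetric functions $c_m$ of the Chern roots. Taking the degree-$\ell$ component writes $c_\ell(E)$ as a sum over tuples $\mathbf m=(m_1,\dots,m_\ell)$ with $\sum_s s\,m_s=\ell$ of the monomials $\prod_s \frac{\left((-1)^{s-1}(s-1)!\right)^{m_s}}{m_s!}\,\mathrm{ch}_s(E)^{m_s}$. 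I apply this to $E=-Rr_*\CH(n)$, which by Lemma \ref{K^1} is the honest rank-$d$ bundle $K^1$, so Corollary \ref{cor2.3} gives $c_\ell(E)=0$ for $\ell\geq d+1$. The entire proof then reduces to computing the graded pieces $\mathrm{ch}_s(E)$ and checking that, once they are inserted, the partition-dependent signs collapse to a single global factor, leaving exactly \eqref{relation}.

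For the computation of $\mathrm{ch}_s(Rr_*\CH(n))$ I would expand the Grothendieck--Riemann--Roch formula recorded above. Writing $h=\pi_P^*H$ and $b=\pi_R^*\beta$, I substitute three inputs: Lemma \ref{lem2.4} together with the twist by $\pi_P^*\CO_{\BP^2}(-n)$, giving $\mathrm{ch}(q^*\CO_Z\otimes \pi_P^*\CO_{\BP^2}(-n)) = e^{-nh} - e^{-((n+1)h+b)}$; the defining relation of the tautological classes, $(-1)^{k+1}\mathrm{ch}_{k+1}(\BF) = H^2 e_k(0)+H\,e_k(1)+e_k(2)$; and the relative Todd class $\mathrm{td}(\BP^2)=1+\tfrac32 h+h^2$. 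Since $r$ is the trivial $\BP^2$-bundle, $r_*$ extracts the coefficient of $h^2$. The key structural point is that the two exponentials split the answer into two families: the factor $e^{-nh}\,\mathrm{td}(\BP^2)$ contributes the coefficients $1,\ \tfrac32-n,\ \tfrac{(n-1)(n-2)}{2}$ in $h$-degrees $0,1,2$, which after the degree bookkeeping $k+j-1=s$ assemble exactly into $\pi_M^*A_s$; while $e^{-((n+1)h+b)}\,\mathrm{td}(\BP^2)$ contributes $1,\ \tfrac12-n,\ \tfrac{n(n-1)}{2}$, assembling into $\pi_M^*B_j$, and the remaining factor $e^{-b}=\sum_{i=0}^2 \frac{(-1)^i}{i!}b^i$ produces precisely the weights $\frac{\pi_R^*\beta^i}{i!}(-1)^i$. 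Matching total degrees $i+j=s$ then yields $\mathrm{ch}_s(Rr_*\CH(n)) = \pi_M^*A_s - \sum_{i+j=s,\,0\leq i\leq 2}\frac{(-1)^i}{i!}\pi_R^*\beta^i\,\pi_M^*B_j$, i.e. exactly the parenthesized expression of \eqref{relation}, and hence $\mathrm{ch}_s(E)=-\mathrm{ch}_s(Rr_*\CH(n))$ is its negative.

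The step I expect to require the most care is precisely this bookkeeping and the reconciliation of signs. One must simultaneously track the power of $h$ selected by $r_*$, the grading constraint that singles out the triple $e_{s+1}(0), e_s(1), e_{s-1}(2)$ (respectively for $A_s$ and $B_j$) out of the infinite sum $\sum_k$, the sign $(-1)^{k+1}$ from dualizing $\BF$, and finally the global sign in the Newton expansion. For the last point: writing $\mathrm{ch}_s(E)=-X_s$ with $X_s$ the parenthesized expression, each factor contributes $\left((-1)^{s-1}\right)^{m_s}(-1)^{m_s}=(-1)^{s\,m_s}$, and the product over $s$ gives $(-1)^{\sum_s s\,m_s}=(-1)^{\ell}$ uniformly in $\mathbf m$. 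Thus the partition-dependent signs collapse, and $0=c_\ell(E)=(-1)^\ell\sum_{\mathbf m}\prod_s \frac{((s-1)!)^{m_s}}{m_s!}X_s^{m_s}$. Dividing by the nonzero factor $(-1)^\ell$ gives exactly \eqref{relation}, completing the proof.
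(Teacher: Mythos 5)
Your proposal is correct and takes essentially the same approach as the paper: the Grothendieck--Riemann--Roch computation (splitting $\mathrm{ch}(q^*\CO_Z\otimes\pi_P^*\CO_{\BP^2}(-n))$ into the two exponentials $e^{-nh}-e^{-((n+1)h+b)}$ and extracting the $h^2$-coefficient under $r_*$) is identical to the paper's, and your exponential formula $c(E)=\exp\bigl(\sum_{m\geq 1}(-1)^{m-1}(m-1)!\,\mathrm{ch}_m(E)\bigr)$ is just the generating-function form of the Newton identities the paper invokes, with your uniform sign $(-1)^{\ell}$ matching the paper's substitution $\mathbf{p}_s=-s!\,\mathrm{ch}_s(Rr_*\CH(n))$. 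No gaps.
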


\begin{proof}
We expand $\mathrm{ch}(Rr_* \mathcal{H}(n))$ as above, which gives
\begin{align*}
\mathrm{ch}(Rr_* \mathcal{H}(n))&=r_*(\mathrm{ch}(\mathbb{F}^\vee)\cdot \mathrm{ch}(\mathcal{O}_Z\otimes \mathcal{O}_{\BP^2}(-n))\cdot \mathrm{td}(\BP^2))\\
&=r_*(\mathrm{ch}(\mathbb{F}^\vee)\cdot (1-\mathrm{exp}(-H - \beta))\cdot \exp (-nH)\cdot \mathrm{td}(\mathbb{P}^2))\\
&=r_*(\mathrm{ch}(\mathbb{F}^\vee)\cdot \exp (-nH)\cdot \mathrm{td}(\mathbb{P}^2))-r_*(\mathrm{ch}(\mathbb{F}^\vee)\cdot \mathrm{exp}(-(n+1)H - \beta)\cdot \mathrm{td}(\mathbb{P}^2)).\tag{$\dagger$}
\end{align*}
Here the second equality follows from Lemma \ref{lem2.4}, and we temporarily suppress the various pull-back maps in the expression for notational convenience. 

\smallskip

We set
\[
    \CL:=\sum_{k\geq 0} e_k(2),\quad
    \CM:=\sum_{k\geq 0} e_k(1),\quad
    \CN:=\sum_{k\geq 1} e_k(0),
\]
which are all finite sums. From the definition of the classes $e_k(j)$, we have
\[
\mathrm{ch}(\mathbb{F}^\vee)=\CL + \CM \cdot H +\CN \cdot H^2.
\]
Recall that $\mathrm{td}(\mathbb{P}^2)=1+\frac{3}{2}H +H^2$, the first term in $(\dagger)$ reads
\begin{align*}
r_*(\mathrm{ch}(\mathbb{F}^\vee)\cdot \exp (-nH)\cdot \mathrm{td}(\mathbb{P}^2))&=r_*((\CL+\CM\cdot H+ \CN\cdot H^2)\cdot \mathrm{exp}(-nH) \cdot \mathrm{td}(\mathbb{P}^2))\\
&=\CN+\big(\frac{3}{2}-n\big)\CM+\big(\frac{1}{2}n^2-\frac{3}{2}n+1\big)\CL\\
&=\sum_{s\geq 0} A_s.
\end{align*}
Similarly, the second term reads
\begin{align*}
r_*(\mathrm{ch}(\mathbb{F}^\vee)\cdot \mathrm{exp}(-(n+1)H - \beta)\cdot \mathrm{td}(\mathbb{P}^2))&=\mathrm{exp}(-\beta)\cdot\Big(\CN+\big(\frac{1}{2}-n\big)\CM+\frac{1}{2}\big(n^2-n\big)\CL\Big)\\
&=\exp(-\beta)\cdot \sum_{s\geq 0} B_s.
\end{align*}
Putting these together, we arrive at
\[
\mathrm{ch}_s (Rr_* \mathcal{H}(n))=\pi_M^* A_s-\sum_{\substack{i+j=s\\ 0\leq i\leq 2}} \frac{\pi_R^* \beta^i}{i!}(-1)^i \pi_M^* B_j.
\]
Let $x_1, x_2, \ldots, x_d$ be the Chern roots of the vector bundle $K^1$ in Lemma \ref{K^1}. Then we have
\[
c_\ell(-Rr_*\mathcal{H}(n))= c_\ell(K^1)=\mathbf{e}_\ell(x_1,\ldots, x_d),
\]
where $\mathbf{e}_\ell(x_1,\ldots, x_d)$ denotes the $\ell$-th elementary symmetric polynomials in the $x_i$. We write 
\[\mathbf{p}_\ell(x_1,\ldots, x_d):=\sum_{i=1}^d x_i^\ell
\]
the usual $\ell$-th power sums. Newton's identity then gives
\[
\mathbf{e}_\ell=(-1)^\ell \sum_\mathbf{m}\prod_{s=1}^\ell \frac{(-\mathbf{p}_s)^{m_s}}{m_s! s^{m_s}},
\]
with the summation taken over $\mathbf{m}$ as in the proposition. The desired vanishing now follows from Corollary \ref{cor2.3} and the identity $\mathbf{p}_s=-s!\cdot  \mathrm{ch}_s(Rr_*\mathcal{H}(n))$.
\end{proof}

Before going into details in Section \ref{Sec2.3}, we briefly explain our strategy to produce relations. 

\smallskip

The expression in (\ref{relation}) is a polynomial in the variable $\pi_R^* \beta$ of degree $\leq 2$ . We can integrate it with respect to $\pi_R^* \beta^j$ over $M_{d,\chi} \times \check{\BP}^2$:
\[
\pi_{M*}(\pi_R^* \beta^j \cdot (\textup{Expansion of LHS of (\ref{relation})})) = 0.
\]
This amounts to picking out the coefficient of the term $\pi_R^* \beta^{2-j}$, for $j\in \{0,1,2\}$. The projection formula then gives relations among $e_k(j)$ in $A^*(M_{d,\chi})$. Moreover, for a fixed $j$ we let $n$ take $\{1,2,3\}$; this provides \textit{three} expressions of similar types. Taking linear combinations of these, we obtain desired relations that kill certain leading terms.

\subsection{Explicit computations}\label{Sec2.3} Although the expression (\ref{relation}) looks complicated, it is not hard to read the coefficients of leading terms in a given degree. We illustrate in the following that we can kill all $e_k(j)$ in degrees $\geq d-1$ via the relations produced by (\ref{relation}).

\subsubsection*{Step 1: Kill $A^{\geq d+1}(M_{d,\chi})$ and $e_{d+1}(0) \in A^d(M_{d,\chi})$.}

For each $\ell \geq d+1$, we first integrate (\ref{relation}) with respect to $\pi_R^* \beta^2$ for $n=1,2,3$, and consider the coefficients of the three leading terms (with respect to the order $\prec$) $e_{\ell+1}(0), e_\ell(1), e_{\ell-1}(2)$. 

Since these classes only appear in $A_\ell$ and $B_\ell$, the single relevant $\ell$-tuple $\mathbf{m}$ in the sum is just $\mathbf{m}=(0,0,\ldots, 1)$. We look at the corresponding summand in (\ref{relation}), and the coefficients of the three classes come from integrating the term
\[
(\ell-1)!\cdot (\pi_M^* A_\ell -\pi_M^* B_\ell).
\]
Denoting by $R_i$ the expression obtained by setting $n=i$, we obtain via a direct calculation the following table of coefficients for the leading terms, where we scale all coefficients below by $\frac{1}{(\ell-2)!}$ for convenience:
\begin{center}
\medskip
\begin{tabular}{||c c c c||}
 \hline
  & $R_1$ & $R_2$ & $R_3$ \\ 
 \hline\hline
 $e_{\ell+1}(0)$ & $0$ & $0$ & $0$ \\ 
 \hline
 $e_{\ell}(1)$ & $\ell-1$ & $\ell-1$ & $\ell-1$ \\
 \hline
 $e_{\ell-1}(2)$ & $0$ & $-(\ell-1)$ & $-2(\ell-1)$ \\
 \hline
\end{tabular}
 \medskip
\end{center}
In particular $R_1$ kills the class $e_\ell(1)$, while $R_1-R_2$ kills the class $e_{\ell-1}(2)$. As this holds for $\ell\geq d+1$, we kill all classes $e_k(j)\in A^{\geq d+1}(M_{d,\chi})$ with $j\in \{1,2\}$. 

We still need to treat $e_k(0) \in A^{\geq d+1}(M_{d,\chi})$. For this purpose, we integrate (\ref{relation}) with respect to $\pi_R^* \beta$ for $n=1,2,3$. This produces relations in $A^{\ell-1}(M_{d,\chi})$. Similarly, we look at the coefficients of the three leading terms $e_{\ell}(0), e_{\ell-1}(1), e_{\ell-2}(2)$. These classes appear in $A_{\ell-1}$ and $B_{\ell-1}$, so there are two relevant $\mathbf{m}$, namely $(0,0,\ldots,0,  1)$ and $(1,0,\ldots, 1,0)$. The coefficients of the three terms come from integrating
\begin{align*}
    (\ell-1)!\cdot \pi_R^* \beta \cdot \pi_M^* B_{\ell-1}+
    (\ell-2)!\cdot \pi_R^* \beta \cdot \pi_M^* B_0\cdot (\pi_M^* A_{\ell-1}-\pi_M^* B_{\ell-1}).
\end{align*}

We see that the coefficient of $e_{\ell}(0)$ in $R_1$ is $\ell-1$, so this kills the classes $e_k(0) \in A^{\geq d}(M_{d,\chi})$ by definition of the order $\prec$.

\subsubsection*{Step 2: Kill $~e_{d-1}(2) \in A^d(M_{d,\chi})$}
We have already killed all classes $e_k(j)$ lying in $A^{\geq d+1}(M_{d,\chi})$. In addition, we have also killed $e_{d+1}(0)$ as explained at the end of Step 1. We now kill $e_{d-1}(2)$.

We integrate the relation (\ref{relation}) for $\ell=d+1$ over the class $\pi_R^*\beta$. From the same calculation as above, the coefficients of $e_{d+1}(0), e_d(1), e_{d-1}(2)$ come from the terms
\begin{align*}
    d!\cdot \pi_R^* \beta \cdot \pi_M^* B_{d}+
    (d-1)!\cdot \pi_R^* \beta \cdot \pi_M^* B_0\cdot (\pi_M^* A_{d}-\pi_M^* B_{d}).
\end{align*}
Using Lemma \ref{lem2.1}, we easily compute that 
\[
\pi_M^* B_0=\chi+(n-2)d \in A^0(M_{d,\chi} \times \check{\mathbb{P}}^2).
\]
This gives the following table of coefficients:

\begin{center}
\medskip
\begin{tabular}{||c c c c||}
 \hline
  & $R_1$ & $R_2$ & $R_3$ \\ 
 \hline\hline
 $e_{d+1}(0)$ & $d$ & $d$ & $d$ \\ 
 \hline
 $e_{d}(1)$ & $\chi-\frac{3}{2}d$ & $\chi-\frac{3}{2}d$ & $\chi-\frac{3}{2}d$ \\
 \hline
 $e_{d-1}(2)$ & $0$ & $d-\chi$ & $d-2\chi$ \\
 \hline
\end{tabular}
 \medskip
\end{center}

Since $d\neq \chi$, the linear combination $R_1-R_2$ kills $e_{d-1}(2)$; on the other hand, we see from the table that the classes $e_{d}(1)$ cannot be killed using the relations $R_i$ above as the first two rows are proportional.

\subsubsection*{Step 3: Kill~ $A^{d-1}(M_{d,\chi})$} Next, we integrate (\ref{relation}) with respect to $\pi_R^*(\mathbf{1}_{\mathbb{P}^2})$, leading to relations in $A^{\ell-2}(M_{d,\chi})$. There are four relevant $\mathbf{m}$, namely 
 \[
 \mathbf{m}\in\{(0,0,\ldots, 0,0,1), (1,0,\ldots, 0,1,0), (0,1,\ldots, 1,0,0), (2,0,\ldots, 1,0,0)\}.
 \] 
 Looking at $\ell=d+1$, a direct computation gives the following table of coefficients. The computations are similar as above but involve more terms, so we leave the details to readers. Note that we used $d\geq 4$ here to ensure that $(0,1,\ldots, 1,0,0)\neq (0,2,0,0)$. 
 
 \begin{center}
\medskip
\begin{tabular}{||c c c c||}
 \hline
  & $R_1$ & $R_2$ & $R_3$ \\ 
 \hline\hline
 $e_{d}(0)$ & $\chi-\frac{3}{2}d$ & $\chi-\frac{1}{2}d$ & $\chi+\frac{1}{2}d$ \\ 
 \hline
 $e_{d-1}(1)$ & $\frac{-2\chi^2+6\chi d-5d^2 +d}{4(1-d)}$ & $\frac{-2\chi^2+6\chi d -4 \chi -3d^2 +3d}{4(1-d)}$ & $\frac{-2\chi^2 +6 \chi d -8\chi +3d^2 -3d}{4(1-d)}$ \\
 \hline
 $e_{d-2}(2)$ & $0$ & $\frac{\chi^2-2\chi d +\chi +d^2 -d}{2(1-d)}$ & $\frac{2\chi^2-2\chi d +4 \chi -d^2+d}{2(1-d)}$ \\
 \hline
\end{tabular}
 \medskip
\end{center}
 
 Denote this $3\times 3$ matrix by $\mathsf{M}$. To kill all three classes $e_d(0), e_{d-1}(1), e_{d-2}(2)$, it suffices to show that $\mathsf{M}$ is nonsingular, \textit{i.e.,} $\det(\mathsf{M})\neq 0$. In fact, a direct calculation yields
 \[
 \det(\mathsf{M})=\frac{1}{4(1-d)^2} \chi(d-2)(d-\chi)(d-2\chi) \neq 0.
 \]

 \subsubsection*{Step 4: Kill the remaining class $e_d(1)$}
 We look at $\ell=d+2$ and still integrate (\ref{relation}) with respect to $\pi_R^*(\mathbf{1}_{\mathbb{P}^2})$. A similar calculation gives
 
\begin{center}
\medskip
\begin{tabular}{||c c c c||}
 \hline
  & $R_1$ & $R_2$ & $R_3$ \\ 
 \hline\hline
 $e_{d+1}(0)$ & $\chi-\frac{3}{2}d-\frac{1}{2}$ & $\chi-\frac{d}{2}-\frac{1}{2}$ & $\chi+\frac{d}{2}-\frac{1}{2}$ \\ 
 \hline
 $e_{d}(1)$ & $\frac{5d^2-6\chi d+3d +2\chi^2 -2\chi}{4d}$ & $\frac{3d^2 -6\chi d +3d +2\chi^2 -2\chi}{4d}$ & $\frac{-3d^2-6\chi d +3d+2\chi^2 -2\chi}{4d}$ \\
 \hline
\end{tabular}
 \medskip
\end{center}
 
Consider the linear combination
 \[
 S:=(\chi-\frac{d}{2}-\frac{1}{2})R_1-(\chi-\frac{3}{2}d-\frac{1}{2})R_2,
 \]
 and a computation shows that the coefficient of $e_d(1)$ in $S$ is $\frac{1}{2}(d-\chi)(d-\chi+1)$. Since we have assumed that $0<\chi<d$, this coefficient will never vanish. Thus we can kill $e_d(1)$ as well.
 
 \medskip
  In conclusion, we have produced relations in $A^*(M_{d,\chi})$ with leading terms $e_k(j)$, for every tautological class $e_k(j)\in A^{\geq d-1}(M_{d,\chi})$. Combined with Proposition \ref{prop2.2}, this proves Theorem \ref{thm1} (a). Henceforth, we will call the $3d-7$ classes in (\ref{taut}) the \textit{tautological generators.} \qed
    
\section{Freeness and further discussions}

\subsection{Freeness.} In this section we prove Theorem \ref{thm1} (b), \emph{i.e.,} there is no relations among $c_k(j)$ in $A^{\leq d-2}(M_{d,\chi})$. Recall that the cycle class map \[\mathrm{cl}: \mathrm{CH}^*(M_{d,\chi}) \to H^{2*}(M_{d,\chi},\mathbb{Q})
\]
is isomorphic \cite{Integral}. Therefore to prove freeness results, it suffices to work with the cohomology ring.

We first recall the following theorem recently obtained by Yuan \cite{YY4} concerning the Betti numbers, which relies on a dimension estimate of \cite{MS_GT}.

\begin{thm}[\cite{YY4}, Theorem 1.5]
    \label{lem3.1}
   For even integers $0\leq i\leq 2d-4$, we have an equality between Betti numbers
    \[
    b_i(M_{d,\chi})=b_i({\BP^2}^{[\frac{d(d-3)}{2}-\chi_0]}),
    \]
    where ${\BP^2}^{[n]}$ denotes the Hilbert scheme of $n$-points on $\BP^2$, and $\chi_0 \equiv \chi \mod d$ with $-2d-1\leq \chi_0\leq -d+1$.
\end{thm}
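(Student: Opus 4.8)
The plan is to sandwich both $M_{d,\chi}$ and ${\BP^2}^{[n]}$, where $n=\frac{d(d-3)}{2}-\chi_0$, between a single auxiliary incidence variety, realizing each as a projective-bundle quotient of it, and then to compare Poincar\'e polynomials. Set $N=\dim\BP H^0(\BP^2,\CO_{\BP^2}(d))=\frac{d(d+3)}{2}$, let $\CC\subset \BP^2\times\BP^N$ be the universal degree-$d$ curve, and consider the relative Hilbert scheme of points
\[
\CC^{[n]}=\big\{(Z,C): Z\subset C,\ \mathrm{length}(Z)=n\big\}\subset {\BP^2}^{[n]}\times \BP^N,
\]
with its two projections $\rho:\CC^{[n]}\to {\BP^2}^{[n]}$ (forgetting the curve) and $\sigma:\CC^{[n]}\to M_{d,\chi'}$ (the relative Abel--Jacobi map, sending $(Z,C)$ with $C$ smooth to the pushforward of $\CO_C(Z)$). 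Using $\phi_1,\phi_2$ of Proposition \ref{prop1.4} one reduces to $\chi'=-\chi_0$, and with $g=\binom{d-1}{2}$ the genus of a smooth degree-$d$ curve one has $\chi'=n+1-g$; since $M_{d,\chi'}\cong M_{d,\chi}$ these moduli spaces have equal Betti numbers.

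First I would analyze the two maps over their natural open loci. The fiber of $\rho$ over $Z$ is $\BP(H^0(\CI_Z(d)))$, which is a $\BP^{N-n}$-bundle over the open locus where $Z$ imposes independent conditions on degree-$d$ curves, i.e. $H^1(\CI_Z(d))=0$. The fiber of $\sigma$ over a degree-$n$ line bundle $L$ on a smooth curve $C$ is the complete linear system $|L|=\BP(H^0(C,L))$; as $n\geq 2g-1$ in our range, Riemann--Roch gives $h^0(L)=n-g+1$, so $\sigma$ is a $\BP^{n-g}$-bundle over the locus of line bundles on smooth curves. The given range $-2d-1\leq \chi_0\leq -d+1$ translates into $N-n+1=3d+\chi_0+1\geq d-1$ and $n-g+1=-\chi_0\geq d-1$, so both bundle ranks exceed $d-2$; the worst case $\chi_0=-d+1$ makes the second rank exactly $d-2$, which is what produces the sharp bound $2d-4$.

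Granting the two projective-bundle formulas in cohomology up to degree $2d-4$, the theorem follows from an elementary computation with Poincar\'e polynomials $P_{(-)}(q)$. The two fibrations give, modulo $q^{2d-2}$,
\[
P_{{\BP^2}^{[n]}}(q)\cdot \frac{1-q^{2(N-n+1)}}{1-q^2}\ \equiv\ P_{\CC^{[n]}}(q)\ \equiv\ P_{M_{d,\chi}}(q)\cdot \frac{1-q^{2(n-g+1)}}{1-q^2}.
\]
Because $N-n+1\geq d-1$ and $n-g+1\geq d-1$, both numerators are $\equiv 1 \pmod{q^{2d-2}}$, so $P_{{\BP^2}^{[n]}}(q)\equiv P_{M_{d,\chi}}(q)\pmod{q^{2d-2}}$. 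Comparing coefficients gives $b_i(M_{d,\chi})=b_i({\BP^2}^{[n]})$ for all $i\leq 2d-4$; the odd Betti numbers vanish on both sides (Remark \ref{rmk1.6} and G\"ottsche's formula for $\BP^2$), so only even $i$ are relevant, as in the statement.

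The hard part will be justifying the two projective-bundle formulas in cohomology, since neither $\rho$ nor $\sigma$ is a global projective bundle: each fails over a jumping locus where the fiber dimension increases --- $\rho$ over $\{Z:H^1(\CI_Z(d))\neq 0\}$, and $\sigma$ over sheaves supported on singular curves or failing to be locally free, where even the definition of the Abel--Jacobi map requires care. Since removing a closed subvariety of complex codimension $\geq d-1$ from a smooth variety does not change cohomology in degrees $\leq 2d-4$, it suffices to bound below by $d-1$ the codimension of the preimages of these jumping loci inside $\CC^{[n]}$ (and inside the two bases). On the Hilbert-scheme side this is a dimension count in the geometry of ${\BP^2}^{[n]}$; on the $M_{d,\chi}$ side it is precisely the dimension estimate of \cite{MS_GT} bounding the locus of sheaves with non-generic (e.g. singular or non-reduced) support. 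Turning these codimension bounds into the stated control of $H^{\leq 2d-4}$ from both directions --- in particular verifying that the bad fibers of $\sigma$, which may be large, still sit in high codimension in the total space --- is the technical core I would expect to occupy most of the work.
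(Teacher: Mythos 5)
First, a point of reference: the paper does not prove this theorem at all --- it is imported verbatim from Yuan \cite{YY4}, with only the remark that it relies on a dimension estimate of \cite{MS_GT}. So the benchmark is Yuan's proof, and your sandwich ${\BP^2}^{[n]} \xleftarrow{\ \rho\ } \CC^{[n]} \xrightarrow{\ \sigma\ } M_{d,\chi'}$ with $\chi'=-\chi_0=n+1-g$, compared through Poincar\'e polynomials modulo $q^{2d-2}$, is indeed the strategy used in that literature. Your numerology is also right: $N-n+1=3d+\chi_0+1\geq d$ and $n-g+1=-\chi_0\geq d-1$ are exactly what make both bundle factors congruent to $(1-q^2)^{-1}$ modulo $q^{2d-2}$, and the worst case $\chi_0=-d+1$ is what produces the sharp bound $2d-4$.

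There are, however, two genuine gaps. (i) Your claim that $n\geq 2g-1$ ``in our range'' is false for every $d\geq 5$: since $n-(2g-1)=-g-\chi_0$, one has $n\geq 2g-1$ if and only if $\chi_0\leq -g$ with $g=\binom{d-1}{2}$, whereas the hypothesis only gives $\chi_0\leq -d+1$; for instance $d=5$, $\chi_0=-4$ yields $n=9<11=2g-1$. Consequently $\sigma$ is \emph{not} a $\BP^{n-g}$-bundle over the whole locus of line bundles on smooth curves: whenever $n\leq 2g-2$ there exist special line bundles ($h^1\neq 0$) over which the fiber $\BP(H^0(L))$ jumps. This is repairable, because inside the relative Jacobian the special locus has codimension $n-g+2=-\chi_0+1\geq d$, so it can be absorbed into your list of bad loci, but the step as written is wrong. (ii) More seriously, the codimension estimates you defer are not something that can be outsourced to \cite{MS_GT}. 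Your argument needs: the locus in $M_{d,\chi'}$ of sheaves with non-integral support or with $h^1\neq 0$ has codimension $\geq d-1$, together with the analogous bounds inside the two projective-bundle opens of $\CC^{[n]}$ and inside ${\BP^2}^{[n]}$. The estimate of \cite{MS_GT} is of $\delta$-regularity type: it controls the excess of fiber dimension of $h$ against the codimension of strata of $\BP^N$. Since the non-integral curves already form a codimension-$(d-1)$ locus in $\BP^N$, an estimate of that shape is compatible with the preimage in $M_{d,\chi'}$ having codimension $0$, and by itself gives nothing close to $\geq d-1$. What is actually required is a direct dimension bound for moduli of semistable sheaves supported on reducible and, especially, non-reduced plane curves; proving precisely such bounds is the main content of Yuan's paper \cite{YY4}, of which the stated theorem is a corollary. (A smaller structural point: $\CC^{[n]}$ itself can be singular, so your excision principle must be applied to the two smooth projective-bundle opens and their common open subset, not to $\CC^{[n]}$ directly.) In short, the part you set aside as ``the technical core'' is not merely most of the work --- it is the theorem.
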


This theorem allows us to compute the Betti numbers for $M_{d,\chi}$ by the (known) Betti numbers of Hilbert scheme of points. Note that by \cite{MS_GT}, to apply Theorem \ref{lem3.1} we can take $\chi=1$, so that $\chi_0=-d+1$ and $\frac{d(d-3)}{2}-\chi_0=\frac{(d+1)(d-2)}{2}$. In particular, for fixed $d\geq 4$ we have 
\begin{equation}\label{betti1}
b_{2d-4}(M_{d, \chi})=b_{2d-4}({\BP^2}^{[\frac{(d+1)(d-2)}{2}]}).\end{equation}

Our goal is to show that the invariant (\ref{betti1}) matches exactly the number of monomials formed by $c_{k}(j)$ of cohomological degree $2d-4$, where there are two generators of degree $2$ and three generators for each degree $4,6, \ldots, 2d-4$. Combined with Proposition \ref{thm1.5}, this will imply the freeness in $H^{\leq 2d-4}(M_{d,\chi}, \mathbb{Q})$.

To this end, we invoke G\"ottsche's formula on Betti numbers of Hilbert schemes of points. Denote by $P(-,z):=\sum_i (-1)^i b_i(-) z^i$ the Poincaré polynomial.

\begin{thm}[\cite{Go1}]
\label{gottsche}
Let $S$ be a nonsingular projective surface. We have an identity between two series in variables $z$ and $t$:
\[
\sum_{n\geq 0} P(S^{[n]},z)t^n =\prod_{k\geq 1} \prod_{i=0}^4 (1-z^{2k-2+i}t^k)^{(-1)^{i+1}b_i(S)}.
\]
\end{thm}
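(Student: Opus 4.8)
The plan is to compute the left-hand series motivically, trading the topological invariant $P(-,z)$ for the Hodge--Deligne $E$-polynomial and exploiting the Hilbert--Chow morphism $\pi_n\colon S^{[n]}\to S^{(n)}$ onto the symmetric product. The key reduction is that each $S^{[n]}$ is smooth and projective, so its cohomology is pure, and setting $u=v=z$ in $E(S^{[n]};u,v)$ returns precisely $\sum_i(-1)^i b_i(S^{[n]})z^i=P(S^{[n]},z)$. Working with the compactly supported $E$-polynomial is advantageous because it is additive over locally closed stratifications and (suitably) multiplicative over fibrations; thus the whole generating series is determined by the strata of $S^{(n)}$ together with the fibers of $\pi_n$ over them.

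\textbf{Stratification and local input.} First I would stratify $S^{(n)}$ by partition type: a point $\sum_i m_i[x_i]$ with the $x_i$ distinct records a partition $\nu\vdash n$ whose parts are the multiplicities, and the associated stratum is a configuration space built from symmetric powers of $S$. Over any point of this stratum the fiber of $\pi_n$ is a product $\prod_i \mathrm{Hilb}^{m_i}_0(\mathbb{C}^2)$ of punctual Hilbert schemes, so the only genuinely local input is the generating series of the $\mathrm{Hilb}^m_0(\mathbb{C}^2)$. These carry a torus action whose Bia\l{}ynicki--Birula / Ellingsrud--Str\o{}mme cells are indexed by partitions $\lambda\vdash m$, the cell of $\lambda$ having complex dimension $m-\ell(\lambda)$; writing a partition as a multiset of parts $k$ gives $\sum_m P(\mathrm{Hilb}^m_0,z)\,t^m=\sum_\lambda z^{2(|\lambda|-\ell(\lambda))}t^{|\lambda|}=\prod_{k\geq 1}(1-z^{2k-2}t^k)^{-1}$, which is exactly the $i=0$ factor of the claimed product.

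\textbf{Global assembly.} The passage from the local series to the full answer is governed by the power structure on the Grothendieck ring of varieties of Gusein-Zade--Luengo--Melle-Hern\'{a}ndez, under which the stratification-and-fibration picture becomes the clean identity $\sum_n[S^{[n]}]\,t^n=\big(\sum_m[\mathrm{Hilb}^m_0(\mathbb{C}^2)]\,t^m\big)^{[S]}$. Applying the $E$-polynomial, a ring homomorphism compatible with the power structure, turns this into a plethystic exponential: the class $E(S;z,z)=\sum_i(-1)^i b_i(S)z^i$ distributes over the local factor so that each cohomological degree $i$ shifts the base power $z^{2k-2}$ to $z^{2k-2+i}$ and enters with exponent $(-1)^{i+1}b_i(S)$, the sign flip on odd $i$ reflecting the fermionic behaviour of odd cohomology. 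This produces exactly $\prod_{k\geq 1}\prod_{i=0}^4(1-z^{2k-2+i}t^k)^{(-1)^{i+1}b_i(S)}$.

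\textbf{Main obstacle.} The step I expect to be hardest is making the global assembly rigorous: either one proves the power-structure identity directly, which requires controlling how the punctual fibers degenerate along the configuration strata and verifying the requisite local triviality, or, avoiding that machinery, one carries out the plethystic bookkeeping stratum by stratum and resums. A secondary technical point arises only if one insists on honest positive Betti numbers rather than the alternating polynomial of the statement: one must rule out cancellation, for which the cleanest tool is that $\pi_n$ is semismall, so the decomposition theorem splits $R\pi_{n*}\mathbb{Q}$ into shifted intersection complexes of the stratum closures and the Betti numbers can be read off without interference. A conceptually different route that bypasses the symmetric product altogether is the Nakajima--Grojnowski construction of a Heisenberg algebra action on $\bigoplus_n H^*(S^{[n]})$ through nested-Hilbert-scheme correspondences, which identifies the total cohomology with a Fock space whose graded character is the same infinite product; there the main obstacle migrates to establishing the Heisenberg commutation relations.
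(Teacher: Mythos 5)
There is nothing in the paper to compare your argument against: Theorem \ref{gottsche} is quoted verbatim from G\"ottsche \cite{Go1} (note the citation in the theorem header) and is used as a black box in the proof of Theorem \ref{thm1}(b), so the only meaningful comparison is with the proof in the cited source. Your proposal is a correct outline of a genuine proof, but it is the later motivic one, not G\"ottsche's. The two arguments share exactly the same local input --- the stratification of $S^{(n)}$ by partition type and the Ellingsrud--Str\o{}mme affine paving \cite{ES} of the punctual Hilbert schemes, giving $\prod_{k\geq 1}(1-z^{2k-2}t^k)^{-1}$ --- but they globalize differently: G\"ottsche counts $\mathbb{F}_q$-points of $S^{[n]}$ stratum by stratum and then invokes the Weil conjectures (using smoothness and projectivity of $S^{[n]}$ and comparison with singular cohomology) to turn the zeta function into Betti numbers, whereas you assemble via the power structure on $K_0(\mathrm{Var})$ of Gusein-Zade--Luengo--Melle-Hern\'andez and then apply the $E$-polynomial specialization $u=v=z$; the latter is precisely their published derivation of G\"ottsche's formula, and your sign bookkeeping is right: grouping the factors $(1-u^{p+k-1}v^{q+k-1}t^k)^{-(-1)^{p+q}h^{p,q}(S)}$ by $i=p+q$ yields the exponent $(-1)^{i+1}b_i(S)$ of the statement. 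One simplification you could make: since the paper defines $P(-,z)=\sum_i(-1)^ib_i(-)z^i$ \emph{with} alternating signs, the $E$-polynomial route proves the stated identity directly from purity of the smooth projective $S^{[n]}$, so your ``secondary technical point'' about recovering honest Betti numbers via semismallness is not needed for this statement. As for what each route buys: point counting requires only the technology available in 1990 and no Hodge theory; the power-structure route carries the genuine burden you correctly identify (local triviality along configuration strata and the symmetrization bookkeeping), but it outputs the finer Hodge-number and motivic refinements for free; and the Nakajima--Grojnowski alternative you mention proves more than the identity, namely the Fock-space structure on $\bigoplus_n H^*(S^{[n]})$, at the cost of establishing the Heisenberg relations.
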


Specializing to $S=\mathbb{P}^2$, Theorem \ref{gottsche} yields
\[
\sum_{n\geq 0} \Big(\sum_{i} (-1)^i b_i(X^{[n]})z^i\Big) t^n =\prod_{k\geq 1} (1-z^{2k-2} t^k)^{-1}(1-z^{2k}t^k)^{-1}(1-z^{2k+2}t^k)^{-1}.
\]

In order to look at the coefficient of $z^{2d-4}t^{\frac{(d+1)(d-2)}{2}}$ in the right-hand side, we expand it into a power series:
\begin{align*}
\mathrm{RHS} = \; &(1+t+t^2+\cdots)(1+z^2 t+ z^4 t^2+\cdots)(1+z^4 t+z^8 t^2+\cdots)\\&(1+z^2 t^2+z^4t^4+\cdots)(1+z^4t^2+z^8t^4+\cdots)(1+z^6t^2+z^{12}t^4+\cdots)\cdots.
\end{align*}
Note that $2d-4 < \frac{(d+1)(d-2)}{2}$ whenever $d\geq 4$. Taking into account the first term $(1+t+t^2+\cdots)$, we see that it suffices to count the number of monomials with degree $2d-4$ in the expansion 
\[
(1+z^2+ z^4 +\cdots)(1+z^4 +z^8 +\cdots)(1+z^2 +z^4+\cdots)(1+z^4+z^8+\cdots)(1+z^6+z^{12}+\cdots)\cdots.
\]
The result then follows from the combinatorial operation to `replace' $z$ by the tautological generators $c_k(j)$. More precisely, we compare the above expansion with the generating series
\[
\big[\sum_{i\geq 0 }c_0(2)^i\big]\big[\sum_{i\geq 0 }c_1(2)^i\big]\big[\sum_{i\geq 0 }c_2(0)^i\big] \big[\sum_{i\geq 0 }c_2(1)^i\big] \big[\sum_{i\geq 0 }c_2(2)^i\big]\cdots
\]
where we have two generators in cohomological degree 2 and three in each degree among $4, 6, \ldots, 2d-4$. We conclude that $b_{2d-4}(M_{d,\chi})$ matches with the \textit{maximal possible} dimension, which implies that no relations can occur among the tautological generators. This completes the proof of Theorem \ref{thm1} (b). \qed

\subsection{Numerical data and further discussions.}
\label{sec3.2}
Numerically Betti numbers for low values of $d$ can be obtained using various methods \cite{CKK, CC15, CC16, PB}. The following tables summarize the numerical data for $4\leq d \leq 9$ and $\chi=1$. The second rows list the actual Betti numbers, and the third rows compute the dimensions \textit{assuming freeness} (\emph{i.e.,} there is no relation) of the tautological generators.

\begin{center}
\medskip
\begin{tabular}{||c c c c c c||} 
 \hline
  $M_{4,1}$ & $H^0$ & $H^2$ & $H^4$ & $H^6$ & $H^8$\\ 
 \hline
 actual dim & 1 & 2 & 6 & 10 & 14  \\
 \hline
 freeness & 1 & 2 & 6 & 10 & 20\\
 \hline
\end{tabular}
\medskip
\end{center}

\begin{center}
\begin{tabular}{||c c c c c c c||} 
 \hline
  $M_{5,1}$ & $H^0$ & $H^2$ & $H^4$ & $H^6$ & $H^8$ & $H^{10}$\\ 
 \hline
 actual dim & 1 & 2 & 6 & 13 & 26 & 45 \\
 \hline
 freeness & 1 & 2 & 6 & 13 & 26 & 48\\
 \hline
\end{tabular}
\medskip
\end{center}

\begin{center}
\begin{tabular}{||c c c c c c c c||} 
 \hline
  $M_{6,1}$ & $H^0$ & $H^2$ & $H^4$ & $H^6$ & $H^8$ & $H^{10}$ & $H^{12}$\\ 
 \hline
 actual dim & 1 & 2 & 6 & 13 & 29 & 54 & 101 \\
 \hline
 freeness & 1 & 2 & 6 & 13 & 29 & 54 & 104\\
 \hline
\end{tabular}
\medskip
\end{center}

\begin{center}
\begin{tabular}{||c c c c c c c c c||} 
 \hline
  $M_{7,1}$ & $H^0$ & $H^2$ & $H^4$ & $H^6$ & $H^8$ & $H^{10}$ & $H^{12}$ & $H^{14}$\\ 
 \hline
 actual dim & 1 & 2 & 6 & 13 & 29 & 57 & 110 & 196 \\
 \hline
 freeness & 1 & 2 & 6 & 13 & 29 & 57 & 110 & 199\\
 \hline
\end{tabular}
\medskip
\end{center}

\begin{center}
\begin{tabular}{||c c c c c c c c c c||} 
 \hline
  $M_{8,1}$ & $H^0$ & $H^2$ & $H^4$ & $H^6$ & $H^8$ & $H^{10}$ & $H^{12}$ & $H^{14}$ & $H^{16}$\\ 
 \hline
 actual dim & 1 & 2 & 6 & 13 & 29 & 57 & 113 & 205 & 369 \\
 \hline
 freeness & 1 & 2 & 6 & 13 & 29 & 57 & 113 & 205 & 372\\
 \hline
\end{tabular}
\medskip
\end{center}

\begin{center}
\begin{tabular}{||c c c c c c c c c c c||} 
 \hline
  $M_{9,1}$ & $H^0$ & $H^2$ & $H^4$ & $H^6$ & $H^8$ & $H^{10}$ & $H^{12}$ & $H^{14}$ & $H^{16}$ & $H^{18}$\\ 
 \hline
 actual dim & 1 & 2 & 6 & 13 & 29 & 57 & 113 & 208 & 378 & 657 \\
 \hline
 freeness & 1 & 2 & 6 & 13 & 29 & 57 & 113 & 208 & 378 & 660\\
 \hline
\end{tabular}
\medskip
\end{center}

\smallskip

From these data, it is reasonable to form the following conjecture.

\begin{conj}\label{conj3.3}
    The relations of the least degree among the tautological generators (\ref{taut}) in $A^*(M_{d,\chi})$ occur in $A^d(M_{d,\chi})$. In particular, there is no relation  in degree $d-1$ as well.
\end{conj}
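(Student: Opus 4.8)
The plan is to split Conjecture \ref{conj3.3} into two independent parts, mirroring the two-step strategy behind Theorem \ref{thm1}: (i) there is no relation among the generators (\ref{taut}) in degree $d-1$, and (ii) there is at least one relation in degree $d$. By Proposition \ref{prop1.4} I may assume $\chi=1$, and I would handle the small case $d=4$ separately through the explicit Chung--Moon presentation of $A^*(M_{4,1})$. Since (\ref{taut}) generates, in each degree $m$ one has the a priori bound $\dim A^m(M_{d,\chi}) \leq [z^{2m}]\widetilde F(z)$, where $\widetilde F(z) = (1-z^2)^{-2}\prod_{k=2}^{d-2}(1-z^{2k})^{-3}$ is the Hilbert series of the free polynomial ring on the generators. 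Part (i) asserts this bound is an equality for $m=d-1$, and part (ii) that it is strict for $m=d$.

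For part (ii) I would exploit that the relation machinery of Section \ref{Sec2} is \emph{overdetermined} in degree $d$: relations in $A^d(M_{d,\chi})$ arise both from $\ell=d+1$ integrated against $\pi_R^*\beta$ and from $\ell=d+2$ integrated against $\pi_R^*\mathbf{1}_{\BP^2}$, producing six relations, whereas only the three new classes $e_{d+1}(0),e_d(1),e_{d-1}(2)$ need to be eliminated. By Theorem \ref{thm1}(a) each of these three classes is already a known polynomial in (\ref{taut}); substituting these expressions into the remaining relations yields identities purely among the generators. I would then check, by a finite coefficient computation of the same flavor as the verification $\det(\mathsf M)\neq 0$ in Step 3 of Section \ref{Sec2.3}, that the six relations are linearly independent, so that after three are consumed to express the new classes at least one genuine generator relation survives, forcing $\dim A^d(M_{d,\chi}) < [z^{2d}]\widetilde F(z)$.

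Part (i) is the crux, and here the naive comparison with the Hilbert scheme breaks down. In the range $i\leq 2d-4$ of Theorem \ref{lem3.1} the free count coincides with $b_i({\BP^2}^{[n]})$, but at $i=2d-2$ the G\"ottsche series (Theorem \ref{gottsche}) already carries three extra generators---those of $z$-degree $2(d-1)$---so that $b_{2d-2}({\BP^2}^{[n]})$ exceeds $[z^{2d-2}]\widetilde F(z)$ by exactly $3$. The three surplus classes correspond to $c_d(0),c_{d-1}(1),c_{d-2}(2)$, which are genuine generators on the Hilbert-scheme side but are expressed through lower classes on $M_{d,\chi}$ by Step 3 of Section \ref{Sec2.3}. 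The plan is therefore to establish the refined identity $b_{2d-2}(M_{d,\chi}) = b_{2d-2}({\BP^2}^{[n]}) - 3$ for $n=\tfrac{(d+1)(d-2)}{2}$; combined with the inequality $2(d-1)\leq n$ (valid for $d\geq 5$, which places $b_{2d-2}({\BP^2}^{[n]})$ in the stable range), this gives $\dim A^{d-1}(M_{d,\chi}) = [z^{2d-2}]\widetilde F(z)$ and hence freeness in degree $d-1$.

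The main obstacle is exactly this refined Betti-number identity. Theorem \ref{lem3.1} stops at $i=2d-4$ because the codimension estimate of \cite{MS_GT} controls the perverse filtration of the Hilbert--Chow map $h$, and thereby the comparison with the Hilbert scheme, only up to that degree; degree $2d-2$ sits at the boundary, so proving the $-3$ discrepancy requires sharpening that estimate by one perverse step and identifying the three-dimensional defect through the decomposition theorem. This one-step extension of the support-theoretic input is where I expect the real difficulty to lie. I also expect no comparably clean identity at $i=2d$: that is precisely the degree at which part (ii) produces honest relations, so that $b_{2d}(M_{d,\chi})$ drops strictly below the free count and the deviation from the Hilbert-scheme model becomes genuine.
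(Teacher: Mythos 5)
First, a point of order: the statement you set out to prove is Conjecture \ref{conj3.3}, and the paper contains no proof of it. What the paper offers is numerical evidence for $4\leq d\leq 9$ (Section \ref{sec3.2}), the observation that freeness in degree $d-1$ is \emph{equivalent} to the Betti number identity $b_{2d-2}(M_{d,\chi})=b_{2d-2}({\BP^2}^{[\frac{(d+1)(d-2)}{2}]})-3$, and the remark (Remark \ref{llast}) that the conjecture was settled afterwards by Yuan \cite{YY5}. So your proposal must stand as an independent proof plan, and as such it has a genuine gap in each of its two halves. For the freeness half (your part (i)): your reduction to the displayed Betti identity is correct, but it is literally the reduction the paper itself records in Section \ref{sec3.2} --- the identity \emph{is} the content of the freeness statement, not a step towards it. Your proposed mechanism for proving it, namely extending Theorem \ref{lem3.1} ``by one perverse step'' by sharpening the dimension estimate of \cite{MS_GT}, is left entirely as a hope, and you flag it yourself as the real difficulty. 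It is also not how the conjecture was in fact proven: the route suggested by the paper and taken by \cite{YY5} is to compute $b_{2d-2}(M_{d,\chi})$ directly (in the spirit of \cite{PB, YYY}), not to refine the support-theoretic input; nothing in the paper indicates that the estimate admits such a one-step improvement.

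For the existence half (your part (ii)) there is a concrete logical gap. You correctly note that degree-$d$ relations come from two sources ($\ell=d+1$ integrated against $\pi_R^*\beta$, and $\ell=d+2$ against $\pi_R^*\mathbf{1}_{\BP^2}$), giving six relations against only three leading classes $e_{d+1}(0), e_d(1), e_{d-1}(2)$, so that at least three linear combinations have vanishing leading part. But each such combination only yields an identity $Q=0$ in $A^d(M_{d,\chi})$, where $Q$ is a polynomial in the generators obtained after substituting the Step 3 expressions for the degree-$(d-1)$ classes; to get a \emph{relation} among the generators you need $Q$ to be formally nonzero as a polynomial, and ``linear independence of the six relations'' does not deliver this. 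Nothing a priori prevents every leading-term-free combination from collapsing to the trivial identity $0=0$ after substitution. Ruling that out requires computing the sub-leading terms of (\ref{relation}) --- which the paper never does; all of its tables in Section \ref{Sec2.3} record leading coefficients only --- and exhibiting a coefficient that is a nonzero rational function of $d$ and $\chi$. That is a well-posed but unexecuted computation, so neither half of the conjecture is established by your plan; what you have is a reasonable research outline whose two essential steps are exactly the two open points the paper identifies.
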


The freeness in degree $d-1$ is equivalent to 
\[
b_{2d-2}(M_{d,\chi}) =b_{2d-2}({\BP^2}^{[\frac{(d+1)(d-2)}{2}]}) -3.
\]
In particular, if one can calculate this Betti number using the algorithm of \cite{PB} or the motivic method of \cite{YYY}, then the freeness part of the conjecture follows. In general, a more systematic study of tautological relations is needed to fully understand the relations among (\ref{taut}).

\begin{rmk}\label{llast}
Conjecture \ref{conj3.3} has already been proven recently by Yuan \cite{YY5}.
\end{rmk}

\end{document}